\documentclass{article}
\usepackage{graphicx}
\usepackage{epsfig}
\usepackage{pstricks}
\usepackage{pst-node}
\usepackage{amsfonts, amsthm,amsmath}
\newtheorem{definition}{Definition}
\newtheorem{theorem}{Theorem}[section]
\newtheorem{conjecture}{Conjecture}[section]
\newtheorem{lemma}[theorem]{Lemma}
\newtheorem{observation}{Observation}[section]
\newtheorem{question}{Question}

\title{A new approach to b-coloring of regular graphs}

\begin{document}

\author{Magda Dettlaff$^1$\and Hanna Furma\'nczyk $^1$  \and  Iztok Peterin$^{2,3}$ \and Riana Roux$^4$ \and Rados\l{}aw Ziemann$^1$}
\date{}
\maketitle

\noindent {\small $^1$ University of Gdańsk, Institute of Informatics, Faculty of Mathematics, 
Physics and Informatics, Wit Stwosz 57, 80-309 Gdańsk, Poland.   \\
\small	$^2$ University of Maribor, Faculty of Electrical Engineering and Computer Science, Koro\v{s}ka 46, 2000 Maribor, Slovenia.\\ 
\small	$^3$ Institute of Mathematics, Physics and Mechanics, Jadranska 19, 1000 Ljubljana, Slovenia.\\
\small $^4$ Department of Mathematical Sciences, Stellenbosch University, Stellenbosch, South Africa\\
\small E-mails: magda.dettlaff@ug.edu.pl, hanna.furmanczyk@ug.edu.pl, iztok.peterin@um.si, rianaroux@sun.ac.za, radoslaw.ziemann@ug.edu.pl}

\begin{abstract}
 Let $G$ be a graph and $c:V(G)\rightarrow \{1,\dots,k\}$ a proper $k$-coloring of $G$, i.e. $c(u)\neq c(v)$ for every edge $uv$ from $G$. A proper $k$-coloring is a b-coloring if there exists a vertex in every color class that contains all the colors in its closed neighborhood. The maximum number of colors $k$ admitting b-coloring of $G$ is the b-chromatic number $\chi_b(G)$. 
 
 We present two separate approaches to the conjecture posed by Blidia et. al [Discrete Appl. Math., 157(8):1787–1793, 2009] that $\chi_b(G)=d+1$ for every $d$-regular graph of girth at least five except the Petersen graph. 
\end{abstract}

\section{Introduction}

A coloring of a graph $G$ can (sometimes) be locally adjusted to a different coloring by changing the color of a vertex $v$ to any color not in its closed neighborhood. This is possible only when there exists a color that is not present in the closed neighborhood of $v$. If this is possible for every vertex of a fixed color, then a given coloring can be transformed into a (better) coloring with less colors. By repeating this procedure, we eventually end with a coloring where the mentioned recoloring is not possible anymore. This yields the existence of a vertex in every color that cannot be recolored. Such a vertex, called a b-\emph{vertex}, must contain all the colors of a given coloring in its closed neighborhood. Such a procedure is a heuristic approach to determining the chromatic number $\chi(G)$ and the number of colors, when the procedure stops, is an upper bound for $\chi(G)$.

As usually with heuristics, one would like to have an estimation of how much one can differ from $\chi(G)$; the worst case, called the b-chromatic number $\chi_b(G)$, was introduced by Irving and Manlove in 1999 (see \cite{IrMa}). They proved, among other results, that determining $\chi_b(G)$ is NP-hard in general, but solvable in polynomial time for trees. Since then, the topic gain quite some attention in the community and went into different directions. There were several studies that compare $\chi_b(G)$ with another heuristics' worst case scenario the Grundy number $\Gamma(G)$ by Masih and Zacker (see \cite{MaZa} and the references therein). Another related topic is \emph{b-continuous graphs}, where the recoloring heuristics can stop at any number of colors between $\chi(G)$ and $\chi_b(G)$, see Ibiapina and Silva \cite{IbSi} for the latest publication on this topic.  Recently, the \emph{acyclic} b-chromatic number was introduced by Anholcer et al. \cite{AnCP}. For more about the history, the survey \cite{JaPe} is recommended. 

For an application of the model of b-coloring consider the following example. Assume that we want to organize an engaging training session in company $X$. We want to divide people into groups where no two persons know each other or work in the same department. But we also want to ensure that in each such a group there is at least one person who knows at least one person in each other group. 
Such a situation will help to ensure a good information exchange between groups and, as a result, better integration of all teams. Because we do not want individual groups to be too large, we want to divide employees into the maximum number of groups that provide the above-described property. Note that such a problem can be modeled by an appropriate graph $G$ and its b-coloring with $\chi_b(G)$ colors. Vertices of the graph $G$ correspond to workers and two vertices are adjacent if and only if the corresponding employees work in the same department or know each other. The condition that there is a person in each group who will ensure good communication with the other teams is implemented by b-vertices in each of the color classes in a b-coloring of such a graph. We are interested in maximizing the number of groups for training, so we are interested in a b-coloring with $\chi_b(G)$ colors of the obtained graph.

One of the more investigated classes of graphs with respect to the b-chromatic number are regular graphs. The reason for this is probably that every vertex can be a b-vertex in a coloring of a regular graph. In general, every b-vertex must be of big enough degree, at least the number of colors minus one. But in regular graphs all vertices have the same degree and are therefore all candidates for b-vertices in a coloring. The main question for a $d$-regular graph is if $\chi_b(G)=d+1$. This happens quite often and it was shown by Jakovac and Klav\v zar \cite{JaKl} that there are only four exception among all cubic graphs. It was also shown that $d$-regular graphs with large enough girth has $\chi_b(G)=d+1$. The main conjecture for $d$-regular graphs was posed by Blidia et al. \cite{BLIDIA20091787}.  

\begin{conjecture}[\cite{BLIDIA20091787}]
\label{conj}
Every $d$-regular graph of girth at least 5, different than Petersen graph, has a b-coloring with $d+1$ colors.
\end{conjecture}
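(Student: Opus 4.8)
The plan is to split the conjecture along two axes — the value of the girth and the order of the graph — and to attack it with two complementary constructions, both feeding into the same elementary extension principle. That principle, which I would establish first, is: since $G$ is $d$-regular, every proper partial coloring of $G$ with colors from $\{1,\dots,d+1\}$ extends to a proper total $(d+1)$-coloring (color the uncolored vertices in any order; at the moment a vertex is colored it has at most $d$ already-colored neighbors and hence a free color), and such an extension cannot destroy an existing b-vertex, because the closed neighborhood $N[x]$ of a b-vertex $x$ already carries $d+1$ distinct colors on its $d+1$ vertices and is therefore fully colored. Hence, for a connected $d$-regular $G$ (the disconnected case reduces to this, coloring every component with the same palette), we have $\chi_b(G)=d+1$ as soon as we can produce a proper \emph{partial} $(d+1)$-coloring with a b-vertex in every color class; the whole problem becomes the construction of such a ``core''.

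The first approach builds the core around a single vertex. Fix $v$, write $N(v)=\{u_1,\dots,u_d\}$, put $c(v)=d+1$ and $c(u_i)=i$, and for each $i$ color $N(u_i)\setminus\{v\}$ by a bijection onto $\{1,\dots,d\}\setminus\{i\}$; if the result is proper, then $v$ and every $u_i$ is a b-vertex, all $d+1$ colors are covered, and the reduction finishes the proof. Because $G$ has girth at least $5$, the only non-tree edges inside the ball $B_2(v)$ join two second-shell vertices $w\in N(u_i)\setminus\{v\}$ and $w'\in N(u_j)\setminus\{v\}$ with $i\neq j$, and such edges correspond bijectively to the $5$-cycles through $v$. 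In particular, when $G$ has girth at least $6$ there are no such edges, the bijections may be chosen arbitrarily, and $\chi_b(G)=d+1$ for every connected $d$-regular graph of girth at least $6$. For girth exactly $5$ the construction turns into a constraint problem: find bijections $\phi_i\colon N(u_i)\setminus\{v\}\to\{1,\dots,d\}\setminus\{i\}$ with $\phi_i(w)\neq\phi_j(w')$ on every second-shell edge $ww'$. I would choose $v$ so as to minimize the number of $5$-cycles through it — equivalently, to make the second shell as sparse as possible — and then solve this list-coloring-with-rainbow-blocks instance by a Hall-type or alteration argument.

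The second approach is aimed at graphs that are large relative to $d$, where the single-vertex core leaves too little freedom. Greedily pick vertices $v_1,\dots,v_{d+1}$ that are pairwise at distance at least $3$; this is possible whenever $n$ exceeds about $d(d^2+1)$, since each chosen vertex only forbids the vertices in its ball of radius $2$. Set $c(v_j)=j$ and color $N(v_j)$ by a bijection onto $\{1,\dots,d+1\}\setminus\{j\}$, making each $v_j$ a b-vertex. The closed neighborhoods $N[v_j]$ are then pairwise disjoint, and the only remaining properness constraints are ``bridge edges'' joining $N(v_i)$ to $N(v_j)$ when $d(v_i,v_j)=3$; once again this is a constraint problem that should yield to a greedy/alteration argument. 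Together the two approaches would settle the conjecture for all $d$-regular graphs of girth at least $5$ except, possibly, those whose order is close to the Moore bound $d^2+1$ for girth $5$.

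The main obstacle is precisely this narrow band: graphs of girth exactly $5$ with $d^2+1\le n\le O(d^3)$. For such $G$ the second shell around \emph{every} vertex is dense and the associated constraint problem is rigid — this is exactly what makes the Petersen graph an exception, since one checks that for it the single-vertex core is infeasible for each choice of $v$. What remains is to prove that Petersen is the \emph{only} failure in this band: one must show that every other near-extremal girth-five graph admits a feasible core around some vertex, or around a small seed set of two or three vertices, or else classify these graphs and verify them individually. The most delicate sub-case is the Moore graphs themselves — $C_5$ and the Hoffman–Singleton graph can be handled directly, but the hypothetical $57$-regular Moore graph must be dealt with without knowing whether it exists, presumably by a counting argument showing that its second shell cannot be both dense enough to obstruct the core and compatible with the Moore property. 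I expect this last point to be the genuine difficulty.
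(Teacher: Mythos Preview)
The statement is Conjecture~\ref{conj}, which is \emph{open}; the paper does not prove it but establishes new partial results toward it. Your proposal is a plan with several explicit hand-waves rather than a proof: you say you ``would solve'' the second-shell constraint problem ``by a Hall-type or alteration argument'' without giving one, that the bridge-edge problem ``should yield to a greedy/alteration argument'' without carrying it out, and you close by naming the near-Moore band as ``the genuine difficulty'' without resolving it. The parts you do justify --- the extension principle for partial colorings, the girth-$\geq 6$ case, and the existence of $d+1$ vertices pairwise at distance at least~$3$ when $n$ is large --- are correct but amount to rediscoveries of results already cited in the paper (Kouider's theorem; the Cabello--Jakovac and El~Sahili--Kouider--Mortada line). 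None of this closes the gap you yourself flag.

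The paper's actual contribution does not split along your girth/order axes; instead it imposes local structural hypotheses on the second neighborhood of a single vertex $x$ and shows that the core around $x$ can then be completed. Concretely: (i) if $x$ lies on at most five $6$-cycles in $G[N_2[x]]$, a careful bunch-by-bunch Hall-condition check goes through; (ii) if two bunches $X_i=N(x_i)\setminus\{x\}$ have all their neighbors inside $N_2(x)$, an explicit ordering of rows and columns of $S_2(x)$ produces the required b-vertices, two of them located in $S_2(x)$ rather than in $N(x)$. These two results attack the ``narrow band'' you identify from complementary sides --- sparse versus dense $G[S_2(x)]$ --- but the conjecture itself remains open, so there is no proof in the paper for your proposal to be compared against.
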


This conjecture was considered by many authors, see \cite{BLIDIA20091787,CaJa,K2004,SK2009,SKKM2014,SKM2015}, where it was shown that there exists only a finite number of graphs for which the conjecture could be false as explained in the detail in the following section.

In this paper we continue attacking the above mentioned conjecture by two different approaches in Sections \ref{noC6} and \ref{twoBunchd}, respectively. First we deal with the case when there exists a vertex $x$ that does not belong to too many six cycles which have all vertices on a distance at most two from $x$. In particular, we prove Conjecture \ref{conj} when there is such a vertex with at most five mentioned six cycles. The second approach assumes that all the neighbors of two neighbors of $x$ are at distance at most two to $x$. With this we add a new brick to the confirmation of Conjecture \ref{conj}.


\section{Preliminaries}

We consider only simple, finite and undirected graphs. Let $G=(V,E)$ be a graph and $u,v\in V(G)$. The \emph{distance} $d(u,v)$ between $u$ and $v$ is the minimum number of edges on a path that starts in $u$ and ends in $v$ or infinite if no such path exists. The \emph{open neighborhood} $N(v)$ of $v$ is the set  $\{x\in V(G):xv\in E(G)\}$ and the \emph{closed neighborhood} $N[v]$ is the set $N(v)\cup\{v\}$. The $k$-th \emph{sphere} $S_k(v)$ of $v$ is the set $\{y\in V(G):d(y,v)=k\}$. We will mainly use the second sphere $S_2(v)$. The \emph{second open neighborhood} $N_2(v)$ of $v$ is $N(v)\cup S_2(v)$ and the \emph{second closed neighborhood} $N_2[v]$ of $v$ is $N_2(v)\cup\{v\}$. The \emph{degree} $d_G(v)$ of $v$ in $G$ is the cardinality of $N(v)$. For a positive integer $d$, $G$ is a $d$-\emph{regular graph} if $d_G(v)=d$ for every $v\in V(G)$.  For $S\subseteq V(G)$ we denote by $G[S]$ the subgraph of $G$ induced by $S$. For a positive integer $k$ we use $[k]$ for the set $\{1,\dots,k\}$. As usual, we use $C_k$ for a cycle on $k\geq 3$ vertices, in other words $k$ cycle, and $C_k$ has the length $k$. The \emph{girth} $g(G)$ of $G$ is the length of a shortest cycle in $G$ or infinite if $G$ is acyclic.  

Let $G$ be a $d$-regular graph of girth at least 5 and let $x$ be any of its vertices. Let $x_1,\ldots,x_d$ be neighbors of $x$ in $G$. Since the girth of $G$ is at least 5, all these vertices have disjoint sets of neighbors, excluding $x$. We call the set of neighbours of $x_i$, $i\in[d]$, without $x$, $X_i=\{x_i^1,\ldots,x_i^{d-1}\}$ the \emph{$i$th bunch} (with respect to $x$), i.e. $X_i=N(x_i)\backslash \{x\}$. Clearly, $S_2(x)=\bigcup_{i=1}^d X_i$. The \emph{backward degree} of $x_i^j$, with respect to an ordering of the bunches of $x$, is the number of vertices $x_p^q$, $p<i$, $q\in[d-1]$, that are adjacent to $x_i^j$. In a graph $G$ with $g(G)\geq 5$, the backward degree of $x_i^j$ is less than $i$.   

Let $G$ be a graph and let $k$ be a positive integer. A mapping $c:V(G)\rightarrow [k]$ is a (proper) $k$-\emph{coloring} of $G$ if $c(u)\neq c(v)$ for every $uv\in E(G)$. The \emph{chromatic number} $\chi(G)$ of $G$ is the least positive integer $k$ such that there exists a $k$-coloring of $G$. Let $c$ be a $k$-coloring of $G$. For $v\in V(G)$, $c(v)$ denotes the \emph{color} of $v$. By $V_i$, $i\in [k]$, we denote the $i$th \emph{color class} $\{v\in V(G):c(v)=i\}$. A color $i$ is \emph{available} for $v$ if $V_i\cap N[v]=\emptyset$. By $L(v)$ we denote the set of all available colors for $v\in V(G)$. By $\mathcal{L}_i$ we denote the family $\{L(x_i^1),\ldots,L(x_i^{d-1})\}$, $i\in[d]$. If $L(v)=\emptyset$, then $v$ is a b-\emph{vertex} (of color $c(v)$). A $k$-coloring $c$ of $G$ is a b-\emph{coloring} if every color class $V_i$, $i\in[k]$, contains a b-vertex of color $i$. The b-\emph{chromatic number} $\chi_b(G)$ is the largest integer $k$ such that there exists a b-coloring of $G$ with $k$ colors.  

\subsection{b-chromatic number of regular graphs}

In order to attack Conjecture \ref{conj} some previous results are needed. Conjecture \ref{conj} was confirmed for all graphs with girth $g(G)\geq 6$ by Kouider \cite{K2004}.

\begin{theorem}[\cite{K2004}]
Every $d$-regular graph $G$ with girth $g(G) \geq 6$ has a b-coloring with $d+1$ colors.
\end{theorem}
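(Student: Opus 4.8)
The plan is to exhibit an explicit b-coloring with $d+1$ colors, built around a single vertex and its second closed neighborhood. We may assume $d\geq 1$, the case $d=0$ being trivial. First I would fix an arbitrary vertex $x$, write $N(x)=\{x_1,\dots,x_d\}$, and impose the partial coloring $c(x)=d+1$ and $c(x_i)=i$ for every $i\in[d]$. The point of this choice is that $x$ is automatically a b-vertex of color $d+1$, since $N[x]=\{x,x_1,\dots,x_d\}$ already carries all $d+1$ colors. To make each $x_i$ a b-vertex of color $i$ as well, it suffices that its bunch $X_i=\{x_i^1,\dots,x_i^{d-1}\}$ receives the $d-1$ colors of $[d]\setminus\{i\}$, each exactly once; then $N[x_i]$ carries $\{i\}\cup\{d+1\}\cup([d]\setminus\{i\})=[d+1]$. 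So the second step is: for each $i$, choose an arbitrary bijection between $X_i$ and $[d]\setminus\{i\}$ and color $X_i$ accordingly.

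The key structural observation, and essentially the only place where the hypothesis $g(G)\geq 6$ is used, is that $G[N_2[x]]$ contains no edges beyond the ``tree'' edges $xx_i$ and $x_ix_i^j$. Indeed, an edge inside a bunch would create a triangle with $x_i$; an edge between $x_i$ and a different bunch $X_j$ would create a $4$-cycle through $x$; and an edge between two distinct bunches $X_i$ and $X_j$ would create the $5$-cycle $x\,x_i\,x_i^a\,x_j^b\,x_j\,x$; all of these are excluded once the girth is at least $6$. Consequently the bijections chosen in the previous step impose no constraints on one another, and each is compatible with the fixed colors of $x$ and the $x_i$'s, so together they define a proper coloring of $G[N_2[x]]$.

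Finally I would extend this partial proper coloring to all of $G$ greedily: order the vertices of $V(G)\setminus N_2[x]$ arbitrarily and, processing them one at a time, give each vertex a color of $[d+1]$ avoiding the (at most $d$) colors already present on its neighbors; such a color exists because $d+1>d$. The resulting map $c$ is a proper $(d+1)$-coloring in which $x$ is a b-vertex of color $d+1$ and each $x_i$ is a b-vertex of color $i$, so every one of the $d+1$ color classes contains a b-vertex. Combined with the trivial bound $\chi_b(G)\leq \Delta(G)+1=d+1$, this yields $\chi_b(G)=d+1$.

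I do not expect a genuine obstacle here: the argument is short, and the only two points requiring care are the small case analysis showing that $N_2[x]$ induces a tree (which is exactly where $g(G)\geq 6$, rather than merely $g(G)\geq 5$, is needed, since with girth $5$ crossing edges between bunches are still allowed), and the elementary fact that any partial proper coloring of a graph of maximum degree $d$ extends to a proper coloring using $d+1$ colors.
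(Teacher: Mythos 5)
Your proof is correct. The paper does not prove this statement itself---it is quoted from Kouider's report \cite{K2004}---but your argument is exactly the natural one and matches the framework the paper uses for its own results: color $N_2[x]$ so that $x$ and all of $x_1,\dots,x_d$ become b-vertices, then finish greedily as in Observation~\ref{finish}. Your case analysis correctly isolates where $g(G)\geq 6$ enters (with girth $5$ one only loses edges inside bunches and between $N(x)$ and foreign bunches, while edges between two distinct bunches survive, which is precisely why the paper's later theorems need Hall-type and swapping arguments, whereas here the bijections on the bunches can be chosen freely); the only point stated implicitly is that the bunches are pairwise disjoint (a common vertex would give a $C_4$), which is needed for the partial coloring to be well defined.
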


El Sahili and Kouider in \cite{SK2009} extended this to graphs with $g(G)\geq 5$ without cycles of length six.

\begin{theorem}[\cite{SK2009}]
\label{withoutC6}
If $G$ is a $d$-regular graph with girth $g(G) \geq 5$ and $G$ contains no $C_6$, then $\chi_b(G)=d+1$.
\end{theorem}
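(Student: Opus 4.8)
The plan is the familiar one for $d$-regular graphs: pick a vertex $x$, make $x$ together with $N(x)$ the prospective b-vertices, and show the rest of the coloring can be completed. The bound $\chi_b(G)\le d+1$ is immediate, since a b-vertex of a $k$-coloring has degree at least $k-1$ while $\Delta(G)=d$. For the lower bound I would first dispose of $d\le 2$ (edgeless graphs, perfect matchings, and disjoint unions of cycles of length $5$ or at least $7$, each easily seen to have $\chi_b=d+1$) and reduce to $G$ connected by b-coloring one component with $d+1$ colors and properly coloring the others greedily. So assume $d\ge 3$ and fix $x$ with neighbors $x_1,\dots,x_d$ and bunches $X_1,\dots,X_d$ as in the Preliminaries.

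I would then aim for a $(d+1)$-coloring $c$ with $c(x)=d+1$ and $c(x_i)=i$ in which $x,x_1,\dots,x_d$ are the b-vertices. With this assignment $x$ already sees all $d+1$ colors, and $x_i$ becomes a b-vertex exactly when $X_i$ receives precisely the color set $[d]\setminus\{i\}$; since $|X_i|=d-1$ and $X_i$ is independent (as $g(G)\ge 5$), this asks for a proper coloring of $S_2(x)=\bigcup_i X_i$ restricting to a bijection $X_i\to[d]\setminus\{i\}$ on each bunch. Any such partial coloring of $N_2[x]$ extends greedily to all of $V(G)$, because an uncolored vertex has at most $d$ colored neighbors while $d+1$ colors are available. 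So the problem reduces to coloring $S_2(x)$.

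The decisive use of the hypothesis is the observation that $H:=G[S_2(x)]$ is a matching, i.e. every vertex of $S_2(x)$ has at most one neighbor in $S_2(x)$. Indeed, if $x_i^j$ had two such neighbors $a,b$, then $a,b\notin X_i$ (else a triangle through $x_i$); if $a,b$ lie in one bunch $X_p$ there is a $C_4$ through $x_p$, and if $a\in X_p$, $b\in X_r$ with $p\ne r$ then $x\,x_p\,a\,x_i^j\,b\,x_r\,x$ is a $C_6$ — both forbidden. The same argument shows the $H$-partners of the vertices of $X_d$ lie in pairwise distinct bunches (two of them in a common bunch $X_p$ would close a $C_6$ through $x_p$ and $x_d$); hence, if every vertex of $X_d$ is matched by $H$, its partners occupy the bunches $X_1,\dots,X_{d-1}$, one in each.

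Finally I would color the bunches in the order $X_1,\dots,X_d$: when $X_i$ is reached, each of its vertices has at most one already-colored neighbor and hence at most one forbidden color in $[d]\setminus\{i\}$, so finding the required bijection is a perfect-matching problem solvable by Hall's theorem. Hall fails only if some color is forbidden for every vertex of $X_i$, and since the offending partners lie in distinct earlier bunches — each holding at most one vertex of a given color — this forces $i=d$; moreover, as a partner $w\in X_k$ satisfies $c(w)\ne k$, the only surviving obstruction is that all $d-1$ partners of $X_d$ received color $d$. So the single precaution of coloring $X_1$ so that its at most one $H$-partner of $X_d$ avoids color $d$ — possible because $X_1$ is first and $d\ge 3$ — removes the obstruction, Hall then yields valid bijections for $X_2,\dots,X_d$ in turn, and a greedy completion finishes the b-coloring. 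The hard part is the structural dichotomy that converts ``no $C_4$, no $C_6$'' into ``$H$ is a matching with the partners of $X_d$ spread out''; after that the coloring step is the Hall argument above, whose only genuine subtlety is the last bunch.
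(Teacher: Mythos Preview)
Your argument is correct. Note that the paper does not itself prove this theorem --- it is quoted from El~Sahili and Kouider --- but it does prove the strengthening (Theorem~3.1, for $d\ge 7$) that only a \emph{single} vertex $x$ need lie on no $C_6$. That proof shares your setup and your key structural observation that $G[S_2(x)]$ has maximum degree at most $1$; where you verify Hall's condition on the list system $\mathcal{L}_i$, the paper instead chooses at each stage a coloring of $X_{t+1}$ minimizing the number of monochromatic edges and reaches a contradiction by explicit color swaps. Your Hall route is tidier, and in fact your core argument already yields the paper's stronger Theorem~3.1: the only place you invoke a $C_6$ not passing through $x$ is the claim that the $H$-partners of $X_d$ lie in pairwise distinct bunches, but in the Hall step you never use that claim --- you deduce ``distinct bunches'' from the fact that each earlier bunch carries a given color at most once, which needs only girth~$5$.

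One small overcomplication: your ``single precaution'' is unnecessary. Once a Hall violation at $i=d$ forces one $k$-colored partner into each of $X_1,\dots,X_{d-1}$, the partner sitting in $X_k$ already contradicts $c(w)\ne k$ for every $k\in\{1,\dots,d-1\}$; and $k=d$ cannot be the obstructing color at all, since the target set for $X_d$ is $[d]\setminus\{d\}$. So Hall's condition holds for every bunch regardless of how you color $X_1$, and no precaution is needed.
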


Conjecture \ref{conj} holds also in the case of $d$-regular graphs with large enough order. Cabelo and Jakovac \cite{CaJa} proved this with the help of Hall's marriage theorem. Their bound was improved later by El Sahili et al. \cite{SKM2015} for all $d$-regular graphs and for those without four cycles as follows. 
\begin{theorem}[\cite{SKM2015}]
If $G$ is a $d$-regular graph with at least $2d^3+2d-2d^2$ vertices, then $\chi_b(G)=d+1$.
\end{theorem}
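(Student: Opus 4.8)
The plan is to run the standard argument for forcing the b-chromatic number of a $d$-regular graph to its trivial upper bound $d+1$: single out $d+1$ vertices that are to become the b-vertices, fix a proper colouring on a neighbourhood of them so that each already sees all $d+1$ colours in its closed neighbourhood, and then complete the colouring greedily on the rest of $G$. The only role of the hypothesis $n\ge 2d^3+2d-2d^2$ is to guarantee that there is enough room to carry out the first two steps; the third step is automatic.

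\emph{Choosing the prospective b-vertices.} First I would pick vertices $v_1,\ldots,v_{d+1}$ that are pairwise at distance at least $3$, so that $N[v_1],\ldots,N[v_{d+1}]$ are pairwise disjoint. This can be done greedily: once $v_1,\ldots,v_{i-1}$ have been chosen, the vertices that cannot be taken as $v_i$ are exactly those in $\bigcup_{j<i}N_2[v_j]$, a set of size at most $(i-1)(1+d+d(d-1))=(i-1)(d^2+1)$, which for $i\le d+1$ is at most $d^3+d$. Thus $n>d^3+d$ already produces $d+1$ vertices pairwise at distance at least $3$; the larger threshold in the statement is there because we also want to impose, during the selection, the extra sparsity that the colouring step needs (see below), and making the counting tight is precisely the computation that yields $2d^3+2d-2d^2$.

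\emph{The partial colouring.} Give $v_i$ the colour $i$ for $i\in[d+1]$. Since $d(v_i,v_j)\ge 3$, the sets $N(v_i)$ are pairwise disjoint and an edge between $N(v_i)$ and $N(v_j)$ with $i\ne j$ can occur only when $d(v_i,v_j)$ equals exactly $3$. It remains to colour each $N(v_i)$ bijectively with the $d$ colours of $[d+1]\setminus\{i\}$ while keeping the colouring proper, for then every $v_i$ is a b-vertex of colour $i$. I would process $v_1,\ldots,v_{d+1}$ in order: when $N(v_i)$ is reached, every $u\in N(v_i)$ has at most $d-1$ already-coloured neighbours (none of them a $v_j$, since $d(v_i,v_j)\ge 3$), so at most $d-1$ of its $d$ permitted colours are blocked. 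Selecting the required bijection is then a system-of-distinct-representatives problem and one applies Hall's marriage theorem; it fails only if some $A\subseteq N(v_i)$ has strictly fewer than $|A|$ colours available across it. The delicate point -- and the reason a plain ``distance at least $3$'' choice is not enough -- is that several vertices of $N(v_i)$ could all be blocked from one and the same colour. I would rule this out by having chosen the $v_i$ so that only few of their neighbours have any coloured neighbour at all (a vertex $u\in N(v_i)$ is unconstrained unless $v_i$ lies within distance $3$ of an earlier $v_j$), and by colouring the earlier neighbourhoods so that no colour becomes over-represented near $v_i$; this is exactly what the extra room provided by the vertex count pays for.

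\emph{Completion and conclusion.} Finally one extends the colouring to all of $V(G)$ greedily: an uncoloured vertex has at most $d$ neighbours, so one of the $d+1$ colours is free for it, and this never alters any $N[v_i]$. Hence each $v_i$ remains a b-vertex, we obtain a b-colouring with $d+1$ colours, and since trivially $\chi_b(G)\le d+1$ we conclude $\chi_b(G)=d+1$. The step I expect to be the real obstacle is the middle one: guaranteeing the Hall condition at every stage simultaneously. The selection and the greedy completion are a routine counting argument and a one-line argument respectively; essentially all the work is in showing that the slack guaranteed by $n\ge 2d^3+2d-2d^2$ makes the Hall inequalities hold, for a sufficiently careful choice of the $v_i$ and of the colours placed on the previously treated neighbourhoods (one might alternatively try to cast the entire partial colouring as a single large system-of-distinct-representatives problem and invoke Hall just once).
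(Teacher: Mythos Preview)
The paper does not prove this statement: it is quoted from \cite{SKM2015} (improving an earlier bound of Cabello and Jakovac \cite{CaJa}) purely as background, with no proof given or sketched. So there is nothing in the present paper to compare your attempt against.

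On its own merits, your outline is the right shape and matches the strategy used in \cite{CaJa,SKM2015}: select $d+1$ far-apart candidate b-vertices, colour their closed neighbourhoods so that each becomes a b-vertex, then finish greedily as in Observation~\ref{finish}. You also correctly identify that pairwise distance at least $3$ by itself is not enough, and that the specific threshold $2d^3-2d^2+2d$ enters precisely in controlling how many neighbours of a later $v_i$ can be adjacent to already-coloured vertices, so that the Hall condition survives.

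That said, what you have written is a plan, not a proof. The entire technical content of the theorem lives in the step you label ``the real obstacle'': exhibiting a selection rule for the $v_i$ (and, in \cite{CaJa}, an ordering and a careful greedy/Hall argument on the neighbourhoods) under which the marriage condition provably holds at every stage, and showing that the counting comes out to exactly $2d^3-2d^2+2d$. You have asserted that the slack suffices without carrying out either the selection or the Hall verification; phrases like ``only few of their neighbours have any coloured neighbour at all'' and ``no colour becomes over-represented near $v_i$'' need to be made quantitative. Until that is done, this is an accurate description of the method but not yet a proof.
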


\begin{theorem}[\cite{SKM2015}]
If $G$ is a $d$-regular graph with no $4$-cycles on at least $d^3+d$ vertices and $d\geq 7$, then $\chi_b(G) =d+1$.
\end{theorem}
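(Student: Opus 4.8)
The plan is to construct a b-coloring of $G$ with exactly $d+1$ colours by hand: fix $d+1$ prospective b-vertices that lie far apart, colour them and their neighbourhoods so that each becomes a b-vertex, and then extend greedily to the rest of $G$. Three elementary facts about a $d$-regular graph $G$ with no $C_4$ will be used repeatedly: any two vertices have at most one common neighbour; every vertex of $N(v)$ has at most one neighbour inside $N(v)$ (a path on three vertices inside $N(v)$ together with $v$ would be a $C_4$); and hence $|N_2[v]|\le 1+d+d(d-1)=d^2+1$ for every $v$.

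\emph{Choosing the b-vertices.} I would take $v_1,\dots,v_{d+1}$ pairwise at distance at least $3$, obtained greedily: once $v_1,\dots,v_k$ are chosen, the vertices within distance $2$ of some $v_i$ number at most $k(d^2+1)$, so a new vertex is available as long as this is less than $|V(G)|$. Since $|V(G)|\ge d^3+d=d(d^2+1)$, the process reaches $d+1$ vertices, except possibly in the extremal case where $N_2[v_1],\dots,N_2[v_d]$ are pairwise disjoint and each has exactly $d^2+1$ elements; in that case the $v_i$ are forced to be pairwise at distance at least $5$ with tree-like second neighbourhoods, and I would either rule this out or repair the selection by relocating one $v_i$ so that two of the balls intersect — this is where the exact constant $d^3+d$ and the hypothesis $d\ge 7$ are used. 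With the $v_i$ fixed, set $c(v_i)=i$; since they are pairwise at distance at least $3$, the sets $N[v_1],\dots,N[v_{d+1}]$ are pairwise disjoint and no vertex of $N(v_i)$ is adjacent to any $v_j$ with $j\ne i$.

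\emph{Making each $v_i$ a b-vertex.} It now suffices to colour $\bigcup_{i} N(v_i)$ so that the restriction of $c$ to each $N(v_i)$ is a bijection onto $[d+1]\setminus\{i\}$, since $v_i$ is adjacent to all of $N(v_i)$; this turns $v_i$ into a b-vertex of colour $i$. The only obstructions are edges of $G$ among $\bigcup_i N(v_i)$, and the absence of $C_4$'s keeps them sparse: for $i\ne j$ each vertex of $N(v_i)$ has at most one neighbour in $N(v_j)$ and at most one inside $N(v_i)$, so each vertex of $\bigcup_i N(v_i)$ has at most $d-1$ neighbours there, and a single vertex of an earlier bunch can forbid a fixed colour on at most one vertex of $N(v_i)$ (two such would close a $C_4$ through $v_i$). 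I would then process the bunches $N(v_1),\dots,N(v_{d+1})$ in a carefully chosen order and build the bijections one at a time, each time using Hall's theorem to find a system of distinct representatives avoiding, for every $u\in N(v_i)$, the colours already placed on neighbours of $u$ in earlier bunches. A violation of Hall's condition for a set $T\subseteq N(v_i)$ would mean several colours are simultaneously forbidden on every vertex of $T$; the ``at most one forbidden vertex per earlier-bunch vertex'' fact bounds $|T|$, and for the earlier bunches this already forces Hall's condition. For the later bunches the counting becomes tight and an additional structural argument is needed, exploiting $d\ge 7$ and, if necessary, a local recolouring swap among already-coloured bunches. This is the step I expect to be genuinely hard, and it is exactly where both hypotheses, no $C_4$ and $d\ge 7$, are essential.

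\emph{Extending to $G$.} Finally, colour the remaining vertices of $G$ greedily in an arbitrary order. A still-uncolored vertex $z$ has $d_G(z)=d$ neighbours, and every already-coloured neighbour of $z$ lies in $\bigcup_i N(v_i)$; hence when $z$ is reached at most $d<d+1$ colours are forbidden and one remains. The result is a proper $(d+1)$-colouring of $G$ in which every colour class $i$ contains the b-vertex $v_i$, so it is a b-coloring and $\chi_b(G)\ge d+1$; since any b-vertex in a $d$-regular graph has degree $d$, every b-coloring uses at most $d+1$ colours, so $\chi_b(G)=d+1$.
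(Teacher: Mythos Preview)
The paper you are working from does not prove this theorem; it is quoted from \cite{SKM2015} as background, with no argument supplied. So there is no proof here to compare against directly. Your outline does follow the same overall strategy as Cabello--Jakovac \cite{CaJa} and its refinement in \cite{SKM2015}: pick $d+1$ well-separated centres, rainbow-colour their neighbourhoods via Hall's theorem, then finish greedily. That is indeed the right skeleton.

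However, what you have written is a plan, not a proof, and the two places you yourself flag as incomplete are precisely where the real work lies. For the selection step, the greedy bound $k(d^{2}+1)<|V(G)|$ only guarantees $d$ centres when $|V(G)|=d^{3}+d=d(d^{2}+1)$; your proposed ``rule this out or relocate one $v_i$'' in the extremal case is asserted, not argued, and it is exactly here that the constant $d^{3}+d$ is sharp. For the Hall step, your own counting shows that when processing the $i$th bunch each list has size at least $d+1-i$ while each fixed colour is forbidden on at most $i-1$ vertices of $N(v_i)$; once $2i>d+2$ these two bounds leave an uncovered range of subset sizes, and for $i=d+1$ a single colour can in principle be forbidden on every vertex of the last bunch. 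The phrase ``an additional structural argument is needed, exploiting $d\ge 7$ and, if necessary, a local recolouring swap'' is a description of the difficulty, not a resolution of it. Until both steps are carried out explicitly, this remains a plausible sketch rather than a proof.
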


Conjecture \ref{conj} was attacked and proven also for $d\leq 6$ by Blidia et al. \cite{BLIDIA20091787}.

\begin{theorem}[\cite{BLIDIA20091787}]
Let $G$ be a $d$-regular graph with girth $g(G) \geq 5$, different from the Petersen graph. If $d\leq 6$, then $\chi_b(G)=d+1$.
\end{theorem}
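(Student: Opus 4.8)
\noindent\emph{Proof idea.} The plan is to produce a b-coloring with $d+1$ colors that is built locally around one well chosen vertex. First I would handle $d\le 2$ directly ($K_2$, respectively a disjoint union of cycles of length at least five), and for $3\le d\le 6$ apply Theorem~\ref{withoutC6}: if $G$ has no $6$-cycle we are already done, so from now on $G$ contains a $C_6$. Next, fix a vertex $x$ with neighbors $x_1,\dots,x_d$ and bunches $X_1,\dots,X_d$, and set $c(x)=d+1$ and $c(x_i)=i$ for all $i\in[d]$; then $x$ is at once a b-vertex of color $d+1$. Since $N[x_i]=\{x,x_i\}\cup X_i$, the vertex $x_i$ becomes a b-vertex of color $i$ as soon as the $d-1$ vertices of $X_i$ carry precisely the $d-1$ colors of $[d]\setminus\{i\}$, each once. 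Thus the whole construction reduces to properly coloring $G[S_2(x)]$ from the palette $[d]$ so that every bunch $X_i$ is rainbow and avoids exactly color $i$; the vertices of $V(G)\setminus N_2[x]$ can afterwards be colored greedily from $[d+1]$, each having degree $d$, which preserves properness and leaves the b-vertices $x,x_1,\dots,x_d$ intact since their closed neighborhoods lie inside the already colored set $N_2[x]$. Together with the trivial inequality $\chi_b(G)\le d+1$ this gives the theorem.

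It remains to color the bunches, and here I would use the girth: $g(G)\ge 5$ forces each bunch to be independent, the bunches to be pairwise disjoint, every edge of $S_2(x)$ to join two different bunches, and every vertex of $S_2(x)$ to have at most one neighbor in each other bunch. Coloring $X_1,\dots,X_d$ in some order, when $X_i$ is reached each $x_i^k$ has at most $i-1$ already colored neighbors in $S_2(x)$, hence a list of at least $d-i$ admissible colors inside $[d]\setminus\{i\}$; a rainbow coloring of $X_i$ respecting these lists is exactly a perfect matching in the natural bipartite graph between $X_i$ and $[d]\setminus\{i\}$, and the thing to verify is Hall's condition. The key point is that Hall can fail only if some bunch contains several vertices whose admissible lists collapse onto one small common set, and unraveling such a collapse through the bunch structure forces $x$ to lie on many $5$- and $6$-cycles contained in $N_2[x]$ (or, in tight cases, makes one replace the naive b-vertices $x_i$ by suitable vertices of $S_2(x)$). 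So whenever some vertex $x$ is incident to only few such short cycles, the construction goes through.

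This leaves graphs in which every vertex lies on many short cycles inside its second neighborhood, and for those I would use $d\le 6$ decisively. Counting edge-ends in $S_2(x)$ gives $2e(S_2(x))+e(S_2(x),S_3(x))=d(d-1)^2$, so demanding that $S_2(x)$ be dense at every vertex, together with $g(G)\ge 5$ and $d\le 6$, bounds the order of $G$ and leaves only a short explicit list of candidate graphs, which I would examine one at a time; the Petersen graph is the unique one among them with $\chi_b<d+1$. I expect the main obstacle to be precisely this last stage: describing the very rigid local structure that is forced once the matching argument breaks at every vertex, and then pushing through the finite — and for $d=5,6$ genuinely intricate — case analysis that isolates the Petersen graph as the only exception. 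A secondary nuisance is the boundary of the Hall step, where for the last bunches the admissible lists become so short that one must reorder the bunches or locally reroute a few colors rather than quote Hall directly.
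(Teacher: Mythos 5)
This statement is quoted from Blidia, Maffray and Zemir \cite{BLIDIA20091787}; the present paper gives no proof of it, so your proposal has to stand on its own, and as it stands it is an outline with the hard part missing. The opening moves are fine and standard: $d\le 2$ by hand, the $C_6$-free case via Theorem~\ref{withoutC6}, coloring $x$ with $d+1$ and $x_i$ with $i$, reducing the task to rainbow-coloring each bunch $X_i$ from $[d]\setminus\{i\}$, and finishing greedily as in Observation~\ref{finish}. The first genuine gap is the Hall step itself. The counting that makes Hall's condition automatic (as in Lemma~\ref{lem_general_extension} of this paper) needs $d\ge 7$: there one uses that at most $t-1$ bunches precede $X_t$ with $t\le 4$ and $d-t+1>t-1$, and even then one only secures four b-vertices among the $x_i$. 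In the range $d\le 6$ — exactly the range of this theorem — backward degrees can be as large as the list sizes, Hall's condition genuinely fails, and the Petersen graph shows that for $d=3$ no amount of reordering or local rerouting can always rescue the construction. Saying that in tight cases one "replaces the naive b-vertices by suitable vertices of $S_2(x)$" or "reorders the bunches" is precisely where the whole content of Blidia et al.'s case analysis lives, and none of it is carried out.

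The second gap is the dichotomy you rely on to close the argument. You claim that if the matching argument breaks at every vertex, then every $S_2(x)$ is dense, and that together with $g(G)\ge 5$ and $d\le 6$ this bounds $|V(G)|$ and leaves a short explicit list of candidates. The identity $2e(G[S_2(x)])+e(S_2(x),S_3(x))=d(d-1)^2$ does not support this: having many $5$- and $6$-cycles through $x$ inside $N_2[x]$ (equivalently, many edges inside $S_2(x)$) is compatible with a large $S_3(x)$ and with graphs of arbitrarily large order, so no finite list of candidate graphs is obtained. Bounding the order from above would essentially force diameter~$2$, i.e.\ Moore-type graphs, which is far stronger than what failure of Hall's condition gives you. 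Consequently the final stage — "examine the remaining graphs one at a time and isolate Petersen" — is not a proof but a restatement of the problem; as you yourself anticipate, this is the main obstacle, and it is exactly the part that the cited proof of Blidia et al.\ supplies and your proposal does not.
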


It follows from the above results that for any $d$, $d\geq 7$, we have only a finite number of $d$-regular graphs for which we do not know if the b-chromatic number is equal to $d+1$ or less.
Therefore, proving Conjecture \ref{conj}, we are left with the following restriction:
\begin{itemize}
    \item $G$ is $d$-regular, $d\geq 7$,
    \item its girth is exactly 5,
    \item there is a $C_6$ in $G$,
    \item $G$ has at most $2d^3+2d-2d^2-1$ vertices.
\end{itemize}

Regular graphs are in some sense easy to deal with respect to the b-chromatic number. If we can color a part of a $d$-regular graph $G$ with $d+1$ colors in such a way that every color class has a b-vertex, then we can color $G$ with $d+1$ colors. Namely, every uncolored vertex $v$ has degree $d$ and with this at most $d$ different colors in its neighborhood. This leaves at least one color for $v$. So, we can color the rest in any way, we may use the greedy algorithm, sometimes also called \emph{first fit} algorithm, to finish the coloring.

\begin{observation}\label{finish}
If we can color a part of a $d$-regular graph $G$ with $d+1$ colors in such a way that every color class has a b-vertex, then  $\chi_b(G)=d+1$.
\end{observation}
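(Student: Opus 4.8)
The plan is to prove both inequalities $\chi_b(G)\le d+1$ and $\chi_b(G)\ge d+1$; the first is immediate from the definition of a b-vertex, and the second follows by a greedy completion of the given partial coloring.

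First I would record the easy upper bound. In any b-coloring of $G$ with $k$ colors, each color class contains a b-vertex $v$, so all $k$ colors appear in $N[v]$; since $|N[v]|=d_G(v)+1=d+1$, this forces $k\le d+1$. Hence $\chi_b(G)\le d+1$ holds for every $d$-regular graph, and it remains only to construct a b-coloring of $G$ with exactly $d+1$ colors.

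For the lower bound, start from the assumed partial coloring $c$ of a vertex subset $S\subseteq V(G)$ using colors from $[d+1]$ in which every color class meets a b-vertex. I would extend $c$ to all of $V(G)$ by processing the vertices of $V(G)\setminus S$ in any order and assigning, by first fit, a color not appearing on the already-colored neighbors. This is always possible: a vertex $v$ being colored has $d_G(v)=d$ neighbors, hence at most $d$ forbidden colors, leaving at least one color in $[d+1]$ available. The outcome is a proper $(d+1)$-coloring of $G$.

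Finally I would check that this extension is a b-coloring. The only point that needs an argument is that the b-vertices of $c$ remain b-vertices. If $v$ is a b-vertex of $c$ for color $i$, then $N[v]$ already carries all $d+1$ colors under $c$; in particular every vertex of $N[v]$ is colored by $c$, so the completion step does not change any color in $N[v]$, and $v$ is still a b-vertex of color $i$ in the extended coloring. Since each color class was already nonempty (it contained a b-vertex) and remains so, the extended coloring uses exactly $d+1$ colors and has a b-vertex in each class, giving $\chi_b(G)\ge d+1$ and therefore $\chi_b(G)=d+1$. I do not expect a genuine obstacle here: the whole content is the remark that a vertex of degree $d$ can always be fitted with one of $d+1$ colors, together with the observation that b-vertices have fully colored closed neighborhoods and are thus insensitive to the completion step.
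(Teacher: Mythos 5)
Your proof is correct and follows essentially the same route as the paper: the paper justifies the observation by exactly this greedy (first-fit) completion argument, noting that an uncolored vertex of degree $d$ sees at most $d$ colors and so can always be colored, while the b-vertices already have fully colored closed neighborhoods and are unaffected. Your explicit inclusion of the trivial upper bound $\chi_b(G)\le d+1$ is a harmless addition that the paper leaves implicit.
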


Let us recall the famous Hall's marriage theorem written in the language of transversals. 

\begin{definition}
\emph{A set $T$ is a} transversal $($a system of distinct representatives$)$ \emph{of the given family $\mathcal{A} = \{A(1);\ldots;A(s)\}$ if there is a bijection $f$ from $[s]$
onto $T$ such that $f(i)$ is an element of $A(i)$, $i \in [s]$.}
\end{definition}
 In other words, $f$ selects one representative from each set $A(i)$ in such a way that no two of these representatives are equal. 
In a famous paper of 1935 \cite{hall1987representatives}, Hall gave the first necessary and sufficient
condition for the existence of a system of distinct representatives (transversal) of a family of sets. 

\begin{theorem}\label{thm:mh}
Given a family of sets $\mathcal{A} = \{A(1);\ldots;A(s)\}$, it has a transversal if and only if for each subset of indices $I\subseteq [s]$ the following marriage condition holds:
\begin{equation}
|I|\leq \vert\bigcup_{j\in I} A(j)\vert .
\label{mhc}
\end{equation}
\end{theorem}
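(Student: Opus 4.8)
\emph{Proof proposal.} The plan is to prove Hall's theorem (Theorem~\ref{thm:mh}) directly, by induction on the number $s$ of sets. The necessity of condition~\eqref{mhc} is immediate and I would dispatch it first: if $f\colon[s]\to T$ is a bijection with $f(i)\in A(i)$, then for any $I\subseteq[s]$ the elements $\{f(j):j\in I\}$ are $|I|$ pairwise distinct members of $\bigcup_{j\in I}A(j)$, so $|I|\le|\bigcup_{j\in I}A(j)|$. All the content is in the sufficiency direction. For the base case $s=1$, condition~\eqref{mhc} applied to $I=\{1\}$ forces $A(1)\neq\emptyset$, and any single element of $A(1)$ is a transversal.

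For the inductive step I would assume the statement for every family of fewer than $s$ sets, take $\mathcal A=\{A(1),\dots,A(s)\}$ satisfying~\eqref{mhc}, and split into two cases according to whether some proper nonempty subfamily is \emph{tight}. \textbf{Case 1:} $|\bigcup_{j\in I}A(j)|\ge|I|+1$ for every $\emptyset\neq I\subsetneq[s]$. Here I would pick any $a\in A(s)$, delete $a$ from each of the remaining sets, and note that $\{A(j)\setminus\{a\}:j\in[s-1]\}$ still satisfies~\eqref{mhc}, since deleting one element lowers the size of each union by at most $1$ and we had at least one unit of slack. Induction yields a transversal $T'$ of this smaller family with $a\notin T'$, and $T'\cup\{a\}$ is a transversal of $\mathcal A$. \textbf{Case 2:} there is some $\emptyset\neq I_0\subsetneq[s]$ with $|\bigcup_{j\in I_0}A(j)|=|I_0|$; set $B=\bigcup_{j\in I_0}A(j)$. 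The subfamily $\{A(j):j\in I_0\}$ inherits~\eqref{mhc} and has $|I_0|<s$ members, so by induction it has a transversal, which a cardinality count forces to be exactly $B$. It remains to match the indices of $[s]\setminus I_0$ to elements outside $B$: I would verify that $\{A(j)\setminus B:j\in[s]\setminus I_0\}$ satisfies the marriage condition, because for $J\subseteq[s]\setminus I_0$ one has $|\bigcup_{j\in J}(A(j)\setminus B)|=|\bigcup_{j\in I_0\cup J}A(j)|-|B|\ge|I_0\cup J|-|I_0|=|J|$. Induction then gives a transversal $T_1$ of this family with $T_1\cap B=\emptyset$, and $B\cup T_1$ is a transversal of $\mathcal A$. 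This closes the induction.

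The two marriage-condition checks (shrinking by one element in Case~1, subtracting $B$ in Case~2) are routine once set up, so I would not belabor them. The step that carries the idea, and the one I expect to be the main obstacle to get cleanly, is the case split itself, and in particular Case~2: one has to recognize that a tight subfamily $I_0$ must be exhausted \emph{exactly} by its own union $B$, which is simultaneously what makes induction applicable to $I_0$ and what frees the complementary indices to be handled independently over $V\setminus B$. Committing greedily to a single representative of $A(s)$ (the Case~1 move) can fail precisely when such a critical set is present, which is why isolating it is essential.
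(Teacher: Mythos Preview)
Your proof is correct and is the classical Halmos--Vaughan inductive argument for Hall's marriage theorem. However, the paper does not prove Theorem~\ref{thm:mh} at all: it is quoted as a known result from Hall's 1935 paper \cite{hall1987representatives} and used as a black box (in Lemma~\ref{lem_general_extension}, Lemma~\ref{lm:bdeg1}, and Theorem~\ref{thm:5C6}), so there is no ``paper's own proof'' to compare against. Your write-up would stand on its own as a self-contained proof if one wished to include it, but for the purposes of this paper the theorem is simply cited.
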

\noindent We will use the Hall's theorem to prove the following lemma.
 
\begin{lemma}\label{lem_general_extension}
    Let $G$ be a $d$-regular graph with $g(G) \geq 5$ and $d \geq 7$. For an arbitrary vertex $x$ there exists a proper $(d+1)$-coloring of $G$ in which $x$ and at least four of its neighbors are b-vertices.
\end{lemma}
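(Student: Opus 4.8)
The plan is to color $x$ with color $d+1$ and its neighbors $x_1,\dots,x_d$ with colors $1,\dots,d$ respectively, so that $x$ is automatically a b-vertex (every color appears in $N[x]$). The real work is to make at least four of the $x_i$ into b-vertices, which means that for (say) $x_1,x_2,x_3,x_4$ we must see every color in $[d+1]\setminus\{i\}$ inside $N[x_i]$; concretely, since $x$ already contributes color $d+1$ to each $x_i$ and $x_i$ itself has color $i$, we need the bunch $X_i=\{x_i^1,\dots,x_i^{d-1}\}$ to receive, in some order, exactly the $d-1$ colors of $[d]\setminus\{i\}$. Because $g(G)\ge 5$, the bunches $X_1,X_2,X_3,X_4$ are pairwise disjoint and each $X_i$ induces no edges, so within a single bunch we may freely permute colors; the only constraints are edges going between different bunches (and edges from a bunch vertex back to some $x_j$, but $x_j$'s color is fixed, which just forbids one color at that bunch vertex).

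First I would set up, for each $i\in\{1,2,3,4\}$ and each vertex $x_i^j$, the list $L(x_i^j)$ of colors from $[d]\setminus\{i\}$ still permissible given the already-fixed colors of the $x_j$'s. Since $x_i^j$ has $d-1$ remaining neighbors besides $x_i$, and at most a few of them are among the (colored) vertices $x_j$, the list $L(x_i^j)$ is large — of size at least $d-1$ minus the backward/forward degree to the already-colored set. The goal for bunch $i$ is a transversal of the family $\{L(x_i^1),\dots,L(x_i^{d-1})\}$ that is a bijection onto $[d]\setminus\{i\}$; Hall's theorem (Theorem \ref{thm:mh}) gives such a system of distinct representatives provided the marriage condition holds, i.e. every $t$ of these lists have union of size at least $t$. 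With $d\ge 7$ and girth at least $5$, the number of already-colored neighbors of any bunch vertex is small enough that each list has size $\ge d-1-O(1)$, and a short counting argument shows Hall's condition cannot fail: a violating subfamily of size $t$ would force $t$ lists to all miss a common set of $d-t$ colors, contradicting the lower bound on list sizes once $t$ is in the dangerous range. I would do this bunch by bunch, recoloring $X_1$, then $X_2$ with the colors of $X_1$ now also fixed (updating the lists accordingly), and so on through $X_4$; since only four bunches are involved and each vertex of $X_i$ has at most $i-1\le 3$ neighbors in earlier bunches, the accumulated damage to list sizes stays bounded by an absolute constant, comfortably below $d-1$ when $d\ge 7$.

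After $X_1,\dots,X_4$ are colored so that $x_1,x_2,x_3,x_4$ (and $x$) are b-vertices, the remaining vertices of $G$ are uncolored but each has degree $d$ in a $(d+1)$-coloring setting, hence at least one available color; by Observation \ref{finish} we may finish greedily and obtain a proper $(d+1)$-coloring of $G$ in which $x$ and four of its neighbors are b-vertices.

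The main obstacle I anticipate is verifying Hall's condition \emph{uniformly} while the lists shrink as successive bunches get colored: one must keep careful track of how many neighbors a vertex $x_i^j$ can have in the union of $\{x\}\cup\{x_1,\dots,x_d\}\cup X_1\cup\dots\cup X_{i-1}$, and argue this count is small relative to $d$. A secondary subtlety is that a single vertex $x_i^j$ might be adjacent to several already-colored vertices carrying \emph{distinct} colors, so the list can drop by more than one at that vertex; the counting must bound the total deficiency across any subfamily, not just per vertex. Once $d\ge 7$ is used to absorb these constant-size losses, the argument should close.
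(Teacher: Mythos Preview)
Your proposal is correct and matches the paper's proof almost exactly: color $x$ with $d+1$, each $x_i$ with $i$, then apply Hall's theorem bunch-by-bunch to $X_1,\dots,X_4$ (backward degree at most $t-1\le 3$ keeps each list of size $\ge d-t$, and for the larger subfamilies one uses that each color is forbidden to at most $t-1<d-t+1$ vertices of $X_t$ when $d\ge 7$), finishing greedily via Observation~\ref{finish}. Your aside about edges from $x_i^j$ to some $x_k$ with $k\ne i$ is unnecessary---girth $\ge 5$ already forbids such edges (they would close a $4$-cycle through $x$)---but this does not affect the argument.
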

\begin{proof}
   Let $x_1, \ldots,x_d$ be the neighbors of $x$. We color the vertex $x$ with color $d+1$ and vertices $x_i$ with color $i$, for each $i\in [d]$. Now the vertex $x$ is a b-vertex in color class $V_{d+1}$. Next, we want to show that independent of the structure of $G$ we can always color vertices in $X_1,X_2,X_3$ and $X_4$ in such a way that vertices $x_1,\ldots,x_4$ are b-vertices in the relevant color classes.

   Let us color vertices in $X_1$ arbitrarily with colors $2,3,\ldots,d$ using each color exactly once. This is always possible. Now, we will show that this coloring can always be extended for the bunches $X_2$ to $X_4$. The rest of vertices can be colored with greedy algorithm (due to Observation \ref{finish}). And in this way we will achieve the desirable proper $(d+1)$-coloring of the entire graph $G$.

   Let us assume now that vertices in bunches $X_1,\ldots, X_{t-1}$ are colored in the desirable way and now, we want to extend the coloring for vertices in $X_t$, for some $t \in \{2,3,4\}$. Note that the backward degree of each vertex in $X_t$ is at most $t-1$ and each vertex $x_t^j$ in $X_t$ has at least $d-1-(t-1)$ available colors, i.e. $|L(x_t^j)| \geq d-t$ for each $j\in [d-1]$. For the family $\mathcal{L}_t=\{L(x_t^1),\ldots,L(x_t^{d-1})\}$ the condition (\ref{mhc}) from the Hall's theorem (Theorem \ref{thm:mh}) is certainly fulfilled for each $k$ element subset when $k\leq d-t$. On the other hand, note that each color is forbidden to at most $t-1$ vertices in $X_t$. Now consider a $k$-element subset $S$ with $k \in \{d-t+1, \ldots, d-1\}$. Since $d-t+1 > t-1$ for $d\geq 7$ and $t\leq 4$, the list of all allowable colors is  $\bigcup_{x_t^j \in S} L(x_t^j) = [d]\setminus \{t\}$. Therefore $|S|\leq \bigcup_{x_t^j \in S} L(x_t^j)$ and it follows from Hall's theorem that such a coloring extension to the vertices of $X_t$ is always possible.

\end{proof}

\section{The limited number of six-cycles in $G[N_2[x]]$ containing $x$}\label{noC6}

We start with an improvement of Theorem \ref{withoutC6}. It is not necessary to forbid cycles of the length 6 in the entire graph, we  only need one vertex in a graph that is not included in any cycle $C_6$.

\begin{theorem}
Let $G$ be a $d$-regular graph, $d\geq 7$, with $g(G)=5$. If there exists a vertex in $G$ that belongs to no cycle $C_6$, then $\chi_b(G)=d+1$.
\end{theorem}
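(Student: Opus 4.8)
Let $x$ be a vertex of $G$ lying on no $6$-cycle, and let $x_1,\dots,x_d$ be its neighbours. The plan is to refine the argument behind Lemma~\ref{lem_general_extension}: there only $x$ and four of its neighbours are turned into b-vertices, whereas here I will make \emph{all} of $x,x_1,\dots,x_d$ b-vertices, which suffices by Observation~\ref{finish}. As in that lemma, colour $x$ with $d+1$ and each $x_i$ with $i$, so that $x$ is already a b-vertex of colour $d+1$, and $x_i$ becomes a b-vertex of colour $i$ exactly when its bunch $X_i=N(x_i)\setminus\{x\}$ receives every colour of $[d]\setminus\{i\}$ exactly once. Thus the whole task reduces to colouring the vertices of $S_2(x)$, consistently with the colours already placed and without using colour $d+1$, so that every bunch becomes ``rainbow''; the rest of $G$ is then completed greedily.

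The first substantive step is to exploit the hypothesis on $x$: I claim that since $x$ lies on no $C_6$, the induced subgraph $G[S_2(x)]$ has maximum degree at most $1$. Suppose some $u\in X_i$ had two distinct neighbours $v,w$ in $S_2(x)$. Neither can lie in $X_i$, since an edge from $u$ to another vertex of $X_i$ closes a triangle through $x_i$; so $v\in X_j$ and $w\in X_k$ with $i\notin\{j,k\}$. If $j=k$, then $x_j\,v\,u\,w\,x_j$ is a $4$-cycle, contradicting $g(G)\ge5$; if $j\ne k$, then $x\,x_j\,v\,u\,w\,x_k\,x$ is a $6$-cycle through $x$, whose six vertices are pairwise distinct (again using $g(G)\ge5$ and the disjointness of bunches), contradicting the choice of $x$. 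Hence every vertex of $S_2(x)$ has at most one neighbour inside $S_2(x)$, and that neighbour lies in a different bunch; equivalently, $G[S_2(x)]$ is a matching, together with isolated vertices, all of whose edges run between distinct bunches.

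Next I would colour the bunches one at a time, in the order $X_1,X_2,\dots,X_d$. Colour $X_1$ arbitrarily with $2,\dots,d$, each used once, and then, assuming $X_1,\dots,X_{t-1}$ are coloured as required, colour $X_t$ as follows. Apart from $x_t$ itself, each vertex $x_t^j$ has at most one already-coloured neighbour---namely its unique $S_2(x)$-neighbour, provided that neighbour lies in an earlier bunch---so at least $d-2$ colours of $[d]\setminus\{t\}$ are available to $x_t^j$. For the family of these lists, Hall's condition~(\ref{mhc}) is immediate for every index set of size at most $d-2$, so only the full index set $I=[d-1]$ must be checked, and there it says precisely that every colour of $[d]\setminus\{t\}$ is available to \emph{some} vertex of $X_t$. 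A colour $c\in[d]\setminus\{t\}$ is blocked at $x_t^j$ only if the unique $S_2(x)$-neighbour of $x_t^j$ has colour $c$; the matching structure makes these neighbours pairwise distinct, and $c$ occurs on at most one vertex of each already-coloured bunch, so $c$ is blocked on at most $t-1$ vertices of $X_t$, while $t-1=d-1$ would force $t=d$ and then $c=d\notin[d]\setminus\{t\}$, a contradiction. Therefore Hall's theorem (Theorem~\ref{thm:mh}) gives a transversal of the list family, which, having $d-1$ sets and $d-1$ target colours, is a bijection of $X_t$ onto $[d]\setminus\{t\}$---the sought rainbow colouring. Iterating to $t=d$ colours all of $S_2(x)$, and properness along the matching edges is automatic, each such edge joining an earlier bunch to a later one whose endpoint already avoided the colour of the earlier endpoint. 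Every colour class then contains a b-vertex, so Observation~\ref{finish} yields $\chi_b(G)=d+1$.

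I expect the only delicate point to be the verification of Hall's condition for the full index set $I=[d-1]$, i.e.\ ruling out that a single colour is simultaneously blocked on an entire bunch. This is exactly where the $C_6$-freeness of $x$ is essential: it is what forces the matching structure of $G[S_2(x)]$ and hence keeps the ``blocking'' neighbours pairwise distinct, which is what makes the counting go through. The remaining ingredients---the list-size bookkeeping and the greedy completion via Observation~\ref{finish}---are routine.
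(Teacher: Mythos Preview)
Your proof is correct and follows the same overall architecture as the paper's: establish that $G[S_2(x)]$ is a matching between distinct bunches, colour $N[x]$ as $c(x)=d+1$, $c(x_i)=i$, then colour the bunches $X_1,\dots,X_d$ one at a time so that every $x_i$ becomes a b-vertex, and finish greedily via Observation~\ref{finish}.

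The only genuine difference is in the inductive step. The paper argues by contradiction via swapping: pick a colouring of $X_{t+1}$ minimizing the number of monochromatic edges into earlier bunches, then exhibit a swap that removes one such edge. You instead verify Hall's condition directly, exactly as in the proof of Lemma~\ref{lm:bdeg1}: each $x_t^j$ has at most one coloured neighbour in $S_2(x)$, so $|L(x_t^j)|\ge d-2$, and for the full index set you observe that a fixed colour $c\in[d]\setminus\{t\}$ is missing from bunch $X_c$ and hence appears on at most $t-1\le d-2$ earlier vertices (with the boundary case $t=d$ handled since then $c\le d-1$). In effect, once you have shown the matching structure, every vertex of every $X_t$ has backward degree at most~$1$, so Lemma~\ref{lm:bdeg1} applies verbatim at each step; your argument is precisely that lemma specialised to this setting. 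This route is a little cleaner and more uniform than the paper's ad hoc swap analysis, at the cost of invoking Hall's theorem rather than an elementary exchange.
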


\begin{proof}
Let $G$ be a $d$-regular graph, $d\geq 7$, with $g(G)=5$ and let $x\in V(G)$ be a vertex that belongs to no cycle $C_6$. Let $x_1,\ldots,x_d$ be neighbors of $x$ in $G$. Since the girth of $G$ is 5, 
\begin{equation}
\text{all vertices $x_1,\ldots,x_d$ have disjoint sets of neighbors, excluding }x.
\end{equation}
As previously, we denote the set of neighbors of $x_i$, $i\in[d]$, limited to $S_2(x)$ (i.e. without the vertex $x$) by $X_i=\{x_i^1,\ldots,x_i^{d-1}\}$. Suppose that there exists $x_i^j$ with two neighbors in $S_2(x)$, say $x_p^q$ and $x_r^s$, $i,p,r\in [d]$, $j,q,s\in [d-1]$. If $i=p$, then $x_ix_i^jx_p^qx_i$ is a cycle of length 3, a contradiction with $g(G)=5$. So, $i\neq p$ and symetric $i\neq r$. If $p=r$, then $x_px_p^qx_i^jx_r^sx_p$ is a cycle of length 4, a contradiction with $g(G)=5$. Hence, also $p\neq r$. Now, $xx_px_p^qx_i^jx_r^sx_rx$ is a cycle of length six, a contradiction to the assumption that $x$ belong to no $C_6$. Therefore we have     
\begin{equation}
  d_{G[S_2(x)]}(x_i^j)\leq 1 \text{ and } d_{G[X_i]}(x_i^j)=0\label{cond:atmost1}
\end{equation}
for every $i\in [d]$ and $j\in [d-1]$. In other words, condition (\ref{cond:atmost1}) states: $|N(x^j_i)\cap \bigcup_{k\not=i}X_k|\leq 1$ and $|N(x^j_i)\cap X_i|=0$ for each $i\in[d]$ and $j\in [d-1]$.
The two above mentioned conditions imply that the edge set of the graph induced by $X_i\cup X_j$ is a matching, for any $i,j \in[d]$. 

Our aim is to achieve a b-coloring with $d+1$ colors, given by color classes $(V_1,\ldots,V_{d+1})$.
We color $x$ with $d+1$ and $x_i$ with color $i$, $i\in[d]$. So, $x$ is a b-vertex for color class $V_{d+1}$. Now, we will color $d-1$ vertices in each $X_i$ with different colors from $[d]\backslash \{i\}$ to make each vertex $x_i$ a b-vertex for $V_i$, $i\in[d]$. After that, $\chi_b(G)=d+1$ follows from Observation \ref{finish}.

We start with $X_1$ and color the vertices arbitrary with different colors from $\{2,\ldots,d\}$. Suppose that we have colored vertices in $X_1\cup \cdots \cup X_t$ in such a way that $x_i$ is a b-vertex in $V_i$ and the partial coloring of $G$ is proper for some $t\in [d-1]$. Now, we want to assign colors to the vertices from $X_{t+1}$. If the proper extension of the coloring such that $x_{t+1}$ is a b-vertex for $V_{t+1}$ is possible, then we are done. So, let us assume that each assignment of colors from $[d]\backslash \{t+1\}$ to vertices from $X_{t+1}$ yields a monochromatic edge. We choose such a coloring of $X_{t+1}$ that minimizes the number of monochromatic edges between $X_1\cup\cdots\cup X_t$ and $X_{t+1}$. Let $x_i^jx_{t+1}^s$ be a monochromatic edge, for some $i\in [t]$ and some $j,s\in [d-1]$, colored with $k$. If there exists a vertex $x_{t+1}^p$, $p\in [d-1]\backslash\{s\}$ without a neighbor in $X_1\cup\cdots\cup X_t$, then we can swap the colors of $x_{t+1}^p$ and $x_{t+1}^s$ to obtain a coloring with one monochromatic edge less, a contradiction of the minimality of monochromatic edges. Hence, every $x_{t+1}^p$, $p\in [d-1]$ has a neighbor in $X_1\cup\cdots\cup X_t$. If there exists $x_{t+1}^p$, $p\in [d-1]\backslash\{s\}$ with a neighbor in $X_i$, then we swap the colors of $x_{t+1}^p$ and $x_{t+1}^s$. It follows from (\ref{cond:atmost1}) that the new coloring has at least one monochromatic edge less, the same contradiction again. Thus, we can assume that $x_{t+1}^s$ is the only vertex with a neighbor in $X_i$. If $t<d-1$, then there exists $x_{t+1}^p$ and $x_{t+1}^r$, $p,r\in[t]\backslash\{s\}$ both with their neighbors in $X_{\ell}$ for some $\ell\in[t]\backslash\{i\}$. These two neighbors are different by (\ref{cond:atmost1}) and therefore of different colors. At least one of them is different from $k$ and we may assume that the neighbor of $x_{t+1}^p$ is colored differently than $k$. Now, we swap colors of $x_{t+1}^p$ and $x_{t+1}^s$ to get to the same contradiction. If $t=d-1$, then $k<d$. Moreover, each of $X_{t+1}\backslash\{x_{t+1}^s\}$ is adjacent to exactly one vertex in $X_1\cup\cdots\cup X_t$ and no two different vertices from $X_{t+1}$ have a neighbor in the same $X_q$, $q\in[d-1]$. But then there exists a vertex $x_{t+1}^p$ with a neighbor in $X_k$. No vertex of $X_k$ is  colored with $k$ and we can swap the colors of $x_{t+1}^p$ and $x_{t+1}^s$ to obtain a contradiction with the minimality of monochromatic edges again. Hence, vertices of $X_{t+1}$ can be colored without the appearance of monochromatic edges. 
\end{proof}

Now, we will consider the case when the number of cycles $C_6$ that any vertex $x$ belongs to is not very large. We can bound the number of cycles $C_6$ in $G[N_2[x]]$ containing $x$  to ensure that Conjecture \ref{conj} holds. We start with a simple observation and an auxiliary lemma.

\begin{observation}\label{obs:degree}
    Let $G$ be a $d$-regular graph, $d \geq 7$, with $g(G)=5$, $x\in V(G)$ and $x_i^j\in X_i$ for some $i\in [d]$ and $j\in[d-1]$. If $d_{G[S_2(x)]}(x_i^j)=p>1$, then there exists exactly $\binom{p}{2}$ cycles $C_6$ in $G[N_{2}[x]]$ containing $x$ and $x_i^j$ as vertices at distance 3 in these cycles.
\end{observation}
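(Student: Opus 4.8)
The plan is to set up a bijection between the $6$-cycles counted in the statement and the $2$-element subsets of $N(x_i^j)\cap S_2(x)$; since $|N(x_i^j)\cap S_2(x)|=p$ by hypothesis, this yields exactly $\binom{p}{2}$ such cycles. Write $N(x_i^j)\cap S_2(x)=\{z_1,\dots,z_p\}$. The first step is to record the rigidity of the neighbourhood of $x_i^j$ forced by the girth. Each $z_m$ lies in some bunch $X_{a_m}$, and a short case analysis of the type already used in this section shows that $a_m\neq i$ for every $m$, that $a_1,\dots,a_p$ are pairwise distinct, and that each $z_m$ is adjacent to exactly one vertex of $N(x)$, namely $x_{a_m}$: if $z_m$ were adjacent to $x_i$, then $x_i z_m x_i^j x_i$ is a triangle; if $z_m$ and $z_{m'}$ were in the same bunch $X_a$, then $x_a z_m x_i^j z_{m'} x_a$ is a $4$-cycle; and if $z_m$ were adjacent to two neighbours $x_a,x_b$ of $x$, then $x x_a z_m x_b x$ is a $4$-cycle --- each contradicting $g(G)=5$.

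Next I would describe an arbitrary $6$-cycle $C$ of $G[N_2[x]]$ through $x$ and $x_i^j$ with these two vertices at distance $3$ in $C$. Such a cycle reads $x\,w_1\,w_2\,x_i^j\,w_4\,w_5\,x$ with $w_1,w_5\in N(x)$. Since $w_2$ is a neighbour of $w_1\in N(x)$ and $w_2\neq x$ we have $d(x,w_2)\le 2$, and $w_2\notin N(x)$ (otherwise $x w_1 w_2 x$ is a triangle), hence $w_2\in S_2(x)$; likewise $w_4\in S_2(x)$. As $w_2,w_4$ are also neighbours of $x_i^j$ and are distinct, $\{w_2,w_4\}$ is a $2$-element subset of $\{z_1,\dots,z_p\}$. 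By the rigidity observations above, $w_1$ is forced to be the unique neighbour of $x$ adjacent to $w_2$ and $w_5$ the unique neighbour of $x$ adjacent to $w_4$; since $w_2,w_4$ lie in distinct bunches, this also gives $w_1\neq w_5$, as needed for a cycle. Thus $C$ is completely determined by the unordered pair $\{w_2,w_4\}$, so the assignment $C\mapsto\{w_2,w_4\}$ is a well-defined injection from the cycles in question into $\binom{\{z_1,\dots,z_p\}}{2}$.

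It remains to check surjectivity. For any $m\neq m'$, the closed walk $x\,x_{a_m}\,z_m\,x_i^j\,z_{m'}\,x_{a_{m'}}\,x$ is a genuine $6$-cycle: its six vertices are pairwise distinct ($x_{a_m}\neq x_{a_{m'}}$ because $a_m\neq a_{m'}$; $z_m\neq z_{m'}$ by choice; $x$ and $x_i^j$ differ from everything else by their distance from $x$), all its edges lie in $G$, and all six vertices lie in $N_2[x]$ (two in $N(x)$, the vertices $z_m,z_{m'},x_i^j$ in $S_2(x)$, and $x$ itself); it visibly maps to $\{z_m,z_{m'}\}$. Hence the correspondence is a bijection and the number of such cycles is $\binom{p}{2}$.

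I do not expect a real obstacle here. The only point needing a little care is that the correspondence is a genuine bijection --- that each such $6$-cycle is recovered from its pair $\{w_2,w_4\}$ and that distinct pairs give distinct cycles --- but both follow immediately from the rigidity statements of the first paragraph, which is exactly where the hypothesis $g(G)=5$ enters.
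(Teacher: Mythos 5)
Your argument is correct, and it is exactly the reasoning the paper has in mind: the paper states this as an observation without proof, and the intended justification is precisely your girth-forced bijection between such $6$-cycles and the $2$-element subsets of $N(x_i^j)\cap S_2(x)$ (each pair $\{z_m,z_{m'}\}$ extending uniquely through $x_{a_m}$ and $x_{a_{m'}}$ to a cycle through $x$). Your rigidity checks (no two $z_m$ in one bunch, each $z_m$ with a unique neighbour in $N(x)$) are the right places where $g(G)=5$ is used, so nothing is missing.
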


\begin{lemma}\label{lm:bdeg1}
    Let $G$ be a $d$-regular graph, $d \geq 7$, with the girth 5 and let $x$ be any arbitrary vertex of $G$. If every vertex of $X_i$, $2 \leq i \leq d$ has a backward degree at most 1 and $\bigcup_{t \in [i-1]}X_t \cup N[x]$ is partially $(d+1)$-colored in such a way that $x,x_1,\ldots,x_{i-1}$ are b-vertices in different color classes $V_{d+1},V_1, \ldots, V_{i-1}$, respectively, then the coloring can be extended into $X_i$ in such a way that the vertex $x_i$ is a b-vertex in $V_i$.
\end{lemma}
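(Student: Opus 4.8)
The plan is to phrase the extension to $X_i$ as a perfect‑matching (equivalently, system of distinct representatives) problem in a bipartite graph and apply Hall's theorem (Theorem~\ref{thm:mh}), much as in Lemma~\ref{lem_general_extension}, but now using the hypothesis that every vertex of $X_i$ has backward degree at most $1$ to push the argument all the way down to girth $5$. First I would collect the structural consequences of $g(G)\ge 5$. An edge inside $X_i$ would give a triangle through $x_i$, so $X_i$ is independent; hence the only restriction on a colouring of $X_i$ is that it be proper against the already‑coloured vertices. Since $N[x_i]=\{x,x_i\}\cup X_i$ with $c(x)=d+1$ and $c(x_i)=i$, making $x_i$ a b‑vertex in $V_i$ is exactly the same as colouring the $d-1$ vertices of $X_i$ bijectively with the palette $[d]\setminus\{i\}$. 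Finally, because $x_1,\dots,x_{i-1}$ are already b‑vertices, each earlier bunch $X_t$ ($t<i$) is already coloured bijectively with $[d]\setminus\{t\}$; in particular $X_t$ contains exactly one vertex of each colour in $[d]\setminus\{t\}$ and no vertex of colour $t$.

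Next I would build the bipartite graph $H$ between $X_i$ and $[d]\setminus\{i\}$, joining $x_i^j$ to a colour $c$ iff $c$ does not already appear on a neighbour of $x_i^j$; a perfect matching of $H$ is precisely the desired extension. The already‑coloured neighbours of $x_i^j$ are $x_i$ (colour $i$, not in the palette) together with the at most one backward neighbour of $x_i^j$ (note $x_i^j$ is adjacent to no $x_t$, as that would create a $C_4$ through $x$, and its forward neighbours are uncoloured). So $x_i^j$ has degree at least $(d-1)-1=d-2$ in $H$, and Hall's condition is immediate for every $S\subseteq X_i$ with $|S|\le d-2$, since one vertex of $S$ alone already has $\ge d-2\ge |S|$ colours in its $H$‑neighbourhood.

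The only remaining case, and the real content, is $S=X_i$, i.e. proving that every colour $k\in[d]\setminus\{i\}$ is available at some vertex of $X_i$. Suppose not. Since $c(x_i)=i\ne k$, each $x_i^j$ must then have a backward neighbour of colour $k$; as the backward degree is at most $1$ this is its unique backward neighbour, and these $d-1$ vertices are pairwise distinct (two of them would form a $C_4$ through $x_i$). Thus $\bigcup_{t<i}X_t$ contains at least $d-1$ vertices of colour $k$. On the other hand, by the structure recorded above, $X_t$ contains at most one vertex of colour $k$ for each $t<i$, and none when $t=k$; hence $\bigcup_{t<i}X_t$ has at most $i-1$ vertices of colour $k$, and at most $i-2$ when $k\le i-1$. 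Since $2\le i\le d$ and $k\ne i$, this gives $d-1\le i-1\le d-1$, so $i=d$; but then $k\in[d-1]$, i.e. $k=t$ for some $t\le d-1=i-1$, and the bound improves to $d-1\le d-2$, a contradiction. So Hall's condition holds for $X_i$ as well, $H$ has a perfect matching, and colouring $X_i$ accordingly completes the extension and makes $x_i$ a b‑vertex in $V_i$.

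I expect the only genuine obstacle to be this last case, and inside it the boundary value $i=d$: there the naive count ``at most one vertex of each colour per earlier bunch'' is exactly tight, and one must use the extra fact that colour $k$ is entirely absent from bunch $X_k$ to gain the last unit and close the argument.
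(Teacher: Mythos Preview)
Your proof is correct and follows essentially the same approach as the paper's: set up the system of distinct representatives for the palette $[d]\setminus\{i\}$ on $X_i$, use backward degree $\le 1$ to get $|L(x_i^j)|\ge d-2$ (handling $|S|\le d-2$ immediately), and then argue separately that every colour is available somewhere in $X_i$. Your treatment of the last step is in fact more explicit than the paper's: the paper simply records that each colour in $[i-1]$ is forbidden to at most $i-2$ vertices of $X_i$ and concludes, whereas you spell out the boundary case $i=d$ and explain why the missing colour $k$ must lie in $[i-1]$ there, which is exactly where the extra unit is recovered.
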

\begin{proof}
    Note that if a vertex $v$ in $X_i$, for any $2\leq i\leq d$, has backward degree equals to 1, then at most one color from $[d]\setminus\{i\}$ is
forbidden for this vertex. It means that $|L(v)|\geq d-2$ for such a vertex $v$. Hence, for each subset $S$ of $X_i$ of cardinality at most $d-2$ we have $|\bigcup_{v\in S}L(v)|\geq |S|$ and we are done by Hall's theorem.
In addition, independently on the backward degree of a vertex, each color from $[i-1]$ is forbidden for at most $i-2$ vertices in $X_i$. If $|S|=d-1$, i.e. $S=X_i$, then $\bigcup_{v\in S}L(v)$ contains all colors from $[d]\setminus \{i\}$. So, by Hall's theorem we can color every vertex in $X_i$, $2\leq i \leq d$.
\end{proof}

\begin{theorem}\label{thm:5C6}
Let $G$ be a $d$-regular graph, $d\geq 7$, with $g(G)=5$. If there exists a vertex $x$ in $G$ contained in at most 5 cycles $C_6$ in $G[N_2[x]]$, then $\chi_b(G)=d+1$.
\end{theorem}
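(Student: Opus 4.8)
The plan is to follow the same incremental bunch-by-bunch colouring strategy used in the previous theorem, but now we must deal with the (few) vertices of $S_2(x)$ whose degree inside $G[S_2(x)]$ exceeds one, since each such vertex creates $\binom{p}{2}$ six-cycles through $x$ by Observation \ref{obs:degree}. First I would colour $x$ with $d+1$ and each $x_i$ with colour $i$, so that $x$ is immediately a b-vertex for $V_{d+1}$; the goal, as usual by Observation \ref{finish}, is only to make each $x_i$ a b-vertex for $V_i$ by colouring the bunches $X_1,\dots,X_d$ with pairwise distinct colours from $[d]\setminus\{i\}$. Since $x$ lies in at most $5$ six-cycles of $G[N_2[x]]$, the total ``excess'' $\sum_{v\in S_2(x)}\binom{d_{G[S_2(x)]}(v)}{2}$ is at most $5$, so only a bounded number of vertices in $S_2(x)$ have $G[S_2(x)]$-degree at least $2$, and for any such vertex that degree is small (at most $3$, since $\binom{4}{2}=6>5$).

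The key step is to order the bunches cleverly. I would like to arrange $x_1,\dots,x_d$ so that every vertex of every bunch $X_i$ with $i\ge 2$ has backward degree at most $1$ with respect to that ordering; if that can be done, Lemma \ref{lm:bdeg1} finishes the proof immediately (colour $X_1$ arbitrarily with $\{2,\dots,d\}$, then extend bunch by bunch). A vertex $x_i^j$ has backward degree $\ge 2$ only if it has two neighbours $x_p^q,x_r^s$ with $p,r<i$; as in the previous proof these force $p\ne r$ and a six-cycle $xx_px_p^qx_i^jx_r^sx_rx$ through $x$. So the ``bad'' incidences are exactly the ones counted (with multiplicity) among the at most $5$ six-cycles. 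Each six-cycle $xx_ax_a^\bullet w x_b^\bullet x_bx$ through $x$, with $w$ the distance-$3$ vertex, contributes a potential backward-degree-$2$ situation at $w$ only if $w$ sits in a bunch $X_c$ with $c>a$ and $c>b$; but $w$ itself lies in some bunch, say $X_c$, and the cycle also certifies $w\in X_a^{\text{neigh}}\cap X_b^{\text{neigh}}$, i.e. $w$ is adjacent to vertices of $X_a$ and of $X_b$. The point is that if we put both $x_a$ and $x_b$ \emph{after} $x_c$ in the ordering, then when we colour $X_c$ the vertex $w$ has backward degree $0$ from these two edges, and when we later colour $X_a$ (resp. $X_b$) the offending neighbour in $X_c$ is already coloured but it is the \emph{only} backward neighbour from this cycle. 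So I would build an auxiliary (multi)graph $H$ on the index set $[d]$ whose edges are the pairs $\{a,b\}$ arising from the $\le 5$ six-cycles, each labelled by the index $c$ of the middle vertex's bunch, and seek a linear order of $[d]$ in which, for every such labelled edge, the label $c$ precedes both endpoints $a,b$ — equivalently, the middle-vertex bunch is coloured before the two ``side'' bunches. Since $H$ has at most $5$ edges and $d\ge 7$, such an order exists by a straightforward greedy/case argument (place the few ``constrained'' indices first in a consistent order, then the rest arbitrarily); one only needs to check the few degenerate patterns, e.g. two six-cycles sharing structure, or a vertex $w$ of $S_2(x)$-degree $3$ which imposes $\binom{3}{2}=3$ edges all with the same label $c$, which is again harmless since a single label preceding a handful of endpoints is easy to realise.

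With such an ordering fixed, I would then verify the backward-degree claim directly: take any $x_i^j$ with $i\ge2$; if its backward degree were $\ge 2$ it would, as above, lie on a six-cycle through $x$ as the distance-$3$ vertex, hence be the ``middle vertex'' $w$ of one of our labelled edges $\{a,b\}$ with label $i$ — but our order was chosen so that $i$ precedes both $a$ and $b$, so the two neighbours realising backward degree lie in $X_a,X_b$ with $a,b>i$, i.e. they are \emph{forward}, not backward. Hence every bunch has maximum backward degree $1$, and Lemma \ref{lm:bdeg1} applies to extend the colouring through $X_2,\dots,X_d$, making each $x_i$ a b-vertex; Observation \ref{finish} then gives $\chi_b(G)=d+1$.

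The main obstacle I anticipate is purely combinatorial bookkeeping at the level of the auxiliary ordering: one must be careful that a single vertex of $S_2(x)$ can be the distance-$3$ vertex of several of the $\le 5$ six-cycles, and that the various middle-vertex bunches and side bunches interact, so a clean argument showing ``an order with all labels preceding their endpoints exists whenever there are at most $5$ labelled edges and $d\ge 7$'' needs to enumerate a small number of configurations (how many distinct bunches can be middle bunches, how they can coincide with side bunches of other cycles, etc.). I would handle this by a short exhaustive case analysis on the at most $5$ six-cycles, grouped by the partition type they induce on the involved indices, rather than a slick global argument; in each case it is immediate that $d\ge7$ leaves enough ``free'' indices to complete the order. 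Everything downstream of the ordering is already packaged in Lemma \ref{lm:bdeg1}.
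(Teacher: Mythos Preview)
Your ordering step is where the argument breaks. You seek a linear order of the bunch indices so that for every six-cycle with middle vertex in $X_c$ and side bunches $X_a,X_b$, the label $c$ precedes both $a$ and $b$ (or, at least, so that every vertex of $S_2(x)$ has backward degree at most $1$). Such an order need not exist, even with only three six-cycles. Take distinct vertices $w_1\in X_1$, $w_2\in X_2$, $w_3\in X_3$ such that $w_1$ has one neighbour in $X_2$ and one in $X_3$, $w_2$ has one neighbour in $X_1$ and one in $X_3$, and $w_3$ has one neighbour in $X_1$ and one in $X_2$, with all nine involved $S_2(x)$-vertices distinct; this is compatible with girth $5$ when $d\ge 7$. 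Your labelled edges are $\{2,3\}$ with label $1$, $\{1,3\}$ with label $2$, and $\{1,2\}$ with label $3$, so the precedence constraints $1\prec 2$, $1\prec 3$, $2\prec 1$, $2\prec 3$, $3\prec 1$, $3\prec 2$ form a directed cycle. Even under the weaker requirement, whichever of $\{1,2,3\}$ is placed last has its $w_i$ with both neighbour-bunches already before it, i.e.\ backward degree $2$. Thus no ordering lets you apply Lemma~\ref{lm:bdeg1} from $X_2$ onward, and the ``short exhaustive case analysis'' you anticipate cannot resolve this; the obstruction is structural, not bookkeeping.

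The paper avoids the problem by not demanding backward degree $\le 1$ everywhere. It orders the bunches so that all vertices of $S_2(x)$-degree at least $2$ sit in the first few bunches (at most five in one case, at most three in the other), and then uses Lemma~\ref{lem_general_extension}---not Lemma~\ref{lm:bdeg1}---to colour $X_1,\dots,X_4$; that lemma tolerates backward degree up to $t-1$ in bunch $X_t$ via a direct Hall argument. A further Hall computation handles the one possible backward-degree-$2$ vertex in $X_5$, and only for $X_6,\dots,X_d$ does Lemma~\ref{lm:bdeg1} take over, where every vertex now has $S_2(x)$-degree (hence backward degree) at most $1$ automatically. The missing ingredient in your plan is precisely this use of Lemma~\ref{lem_general_extension} to absorb the unavoidable high-backward-degree vertices in the early bunches.
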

\begin{proof}
Let $x\in V(G)$ be a vertex included in at most 5 cycles $C_6$ in $G[N_2[x]]$. 
For each of its neighbor $s$ we establish the degree sequence of $N(s)\backslash \{x\}$ limited to $G[S_2(x)]$ and order its entries in a non-ascending way. Next, we order all the sequences for $x$ in the reverse lexicographic order and due to this order we number relevant neighbors of $x$ as $x_1,\ldots,x_d$, respectively.

Since $x$ belongs to at most 5 cycles $C_6$ in $G[N_2[x]]$ and by Observation \ref{obs:degree}, we have $d_{G[S_2(x)]}(x_i^j)\leq 3$ for every $i\in[d]$ and $j\in[d-1]$. Consequently, one of the following situations occur:
$(1)$ at most 5 vertices from $S_2(x)$ has degree 2 in $G[S_2(x)]$ and the rest of vertices from $S_2(x)$ has degree at most one in $G[S_2(x)]$, or $(2)$ at most one vertex from $S_2(x)$ has degree three in $G[S_2(x)]$, at most two vertices from $S_2(x)$ has degree two in $G[S_2(x)]$, and the rest of the vertices from $S_2(x)$ has degree at most one in $G[S_2(x)]$.
Note that, due to the ordering of the vertices, vertices of degree greater than 1 in $G[S_2(x)]$ may only appear in the first 5 bunches in case $(1)$ or the first 3 bunches in case $(2)$. 

We start with case $(1)$. Note that since $g(G)=5$ and $d\geq 7$, by Lemma~\ref{lem_general_extension} we can color  vertices from $\bigcup_{1\leq i\leq 4}X_i\cup N[x]$ such that $x$, $x_1$, $x_2$, $x_3$ and $x_4$ are b-vertices: $c(x):=d+1$, $c(x_i):=i$, $i\in [d]$ (due to the proof of Lemma~\ref{lem_general_extension}).
Now we are in a position to extend this coloring such that $x_5$, $x_6,x_7,\ldots ,x_d$ are also b-vertices. We need to color vertices in each $X_i$, $5\leq i\leq d$, with colors $[d]\setminus\{i\}$.
First, we color vertices from $X_5$. When we take into account only backward degree of the vertices from $\bigcup_{5\leq i\leq d}X_i$, only one vertex $u$ in $X_5$ can have backward degree equal to 2 and any other vertex from this set has backward degree at most 1 in $G[S_2(x)]$.
If this happens, then $|L(u)|\geq d-3$ and $|L(v)|\geq d-2$ for any $v\in X_5\setminus \{u\}$. Hence, for each subset $S$ of $X_5$ of cardinality at most $d-3$ we have $|\bigcup_{v\in S}L(v)|\geq |S|$ and we are done by Hall's theorem. If $d-2\leq |S| \leq d-1$, then each color from $[d]\setminus \{5\}$ is forbidden for at most 4 vertices in $X_5$. Since $d-2\geq 5$, each color from $[d]\setminus \{5\}$ appears in $\bigcup_{x\in S}L(x)$, which implies that $|\bigcup_{x\in S}L(x)|=d-1$. Hence it is possible to properly color the vertices from $X_5$ by Hall's theorem. 
If we do not have such a vertex $u$ of backward degree 2 in $G[S_2(x)]$, then every vertex in $X_5$ has backward degree at most 1 and it follows from Lemma \ref{lm:bdeg1} that we can colour $X_5$ in such a way that $x_5$ is a b-vertex.
In the uncolored bunches $X_6, \ldots, X_d$ every vertex has backward degree at most 1 and so, by 
Lemma \ref{lm:bdeg1}, we can color every vertex in $X_i$, $2\leq i \leq d$.

Now consider case $(2)$. Note that in this case at most 3 bunches contains vertices of degree greater than one limited to $G[S_2(x)]$. So, first we apply Lemma~\ref{lem_general_extension} to receive an appropriate coloring of the  vertices from $\bigcup_{1\leq i\leq 4}X_i\cup N[x]$ and next we extend it due to Lemma \ref{lm:bdeg1} for the rest of the vertices in the subgraph $G[N_2[x]]$. 

Finally, in each case, by Observation \ref{finish} we can extend obtained coloring of $G[N_2[x]]$ for the entire graph $G$. Thus $\chi_b(G)=d+1$.
\end{proof}


\section{Two bunches without neighbors at distance three from $x$}\label{twoBunchd}

In this section we consider $d$-regular graphs, $d\geq 7$, such that for some vertex $x$, at least two of its bunches have all their neighbors within $N_2(x)$.

\begin{theorem}\label{thm:d:2bunches}
Let $G$ be a $d$-regular graph, $d\geq 7$, of the girth 5. If there exists a vertex $x$ in $V(G)$ with at least two bunches where all the neighbors of vertices in a bunch are in $N_2(x)$, then $\chi_b(G)=d+1$. 
\end{theorem}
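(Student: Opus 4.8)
The plan is to follow the template used for the earlier results. Let $X_a$ and $X_b$ be two bunches with respect to $x$ such that every vertex of $X_a\cup X_b$ has its whole neighbourhood inside $N_2(x)$; relabel the neighbours of $x$ so that $a=1$ and $b=2$. Put $c(x):=d+1$ and $c(x_i):=i$ for $i\in[d]$, so that $x$ is already a b-vertex of $V_{d+1}$. By Observation~\ref{finish} it then suffices to colour each bunch $X_i$ with the $d-1$ colours of $[d]\setminus\{i\}$, each used exactly once, so that every $x_i$ becomes a b-vertex of $V_i$; the rest of $G$ is finished greedily.

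The first step is to read off the structure forced by the hypothesis. By $g(G)=5$ a vertex $x_1^j\in X_1$ is adjacent to at most one vertex of any bunch $X_k$ (two would create a $C_3$ or a $C_4$ through $x_1$ and $x_k$) and to no vertex of $X_1$; since all $d-1$ of its neighbours besides $x_1$ lie in $S_2(x)=\bigcup_{k\ne1}X_k$, it has \emph{exactly} one neighbour in each $X_k$, $k\ne1$. Thus $X_1$ is joined to every other bunch by a perfect matching, and so is $X_2$. Consequently every vertex of an ordinary bunch $X_k$ ($3\le k\le d$) has exactly one neighbour in $X_1$, exactly one in $X_2$, and at most one in each other ordinary bunch, hence at most $d-3$ neighbours inside $\bigcup_{3\le\ell\le d}X_\ell$.

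I would colour the bunches in the order $X_1,X_2,X_3,\dots,X_d$; colouring $X_1$ is free and colouring $X_2$ is immediate by Lemma~\ref{lm:bdeg1} (its vertices have backward degree $1$), so fix any such colourings, with $X_1$ using $\{2,\dots,d\}$ and $X_2$ using $\{1,3,\dots,d\}$. Suppose $X_1,\dots,X_{t-1}$ are coloured as desired, $t\ge3$, and we wish to colour $X_t$. Because $X_1$ and $X_2$ are perfectly matched to $X_t$, the colours of the $X_1$-neighbours of the vertices of $X_t$ run through all of $\{2,\dots,d\}$ and those of their $X_2$-neighbours through all of $\{1,3,\dots,d\}$, each value once. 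Taking only these constraints and the forbidden colour $t$ into account, each list has length at least $d-3$ and each colour of $[d]\setminus\{t\}$ is forbidden to at most two vertices of $X_t$, so this truncated list family satisfies the marriage condition of Theorem~\ref{thm:mh} on every subset. The remaining constraints are the at most $t-3$ backward neighbours of each vertex of $X_t$ inside $X_3\cup\dots\cup X_{t-1}$, and each colour is blocked this way for at most $t-3$ additional vertices; arguing exactly as in the proofs of Lemma~\ref{lem_general_extension} and Theorem~\ref{thm:5C6}, and using $d\ge7$, the marriage condition survives and $X_t$ can be coloured, provided $t$ is small enough that the two easy regimes — subsets $S$ with $|S|$ at most about $d-t$, where the lists are long enough, and subsets with $|S|$ larger than about $t$, where every colour is available to someone in $S$ — together cover all subsets $S\subseteq X_t$, i.e. for roughly $t\le(d+1)/2$.

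The main obstacle is the complementary range of \emph{large} $t$, where a gap $d-t+1\le|S|\le t-1$ remains and the naive counting does not rule out a subset $S\subseteq X_t$ whose vertices share a common list of fewer than $|S|$ colours. To handle it I would not colour the ordinary bunches $X_3,\dots,X_{t-1}$ arbitrarily: each of them was only asked to satisfy a marriage condition with slack, so there is substantial freedom in each such colouring, and moreover — since $X_1$ and $X_2$ are perfectly matched to \emph{every} ordinary bunch — the colours appearing inside any ordinary bunch are themselves spread out. I would use this to choose the colourings of $X_3,\dots,X_{t-1}$ so that the colours received by the $X_p$-neighbours of $X_t$ do not cluster on any large subset of $X_t$, which keeps the marriage condition valid. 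Once every $X_i$ is coloured, $x_1,\dots,x_d$ are b-vertices of $V_1,\dots,V_d$, $x$ is a b-vertex of $V_{d+1}$, and Observation~\ref{finish} extends the colouring to all of $G$, so $\chi_b(G)=d+1$. I expect this uniform treatment of the late bunches — rather than just the first half of them — to be the crux of the argument, and the two full bunches, through their perfect matchings to all the others, to be exactly what makes it possible.
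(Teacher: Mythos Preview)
Your sketch is sound up through the point you yourself flag: for $t\le\lfloor d/2\rfloor$ the two regimes ($|S|\le d-t$ and $|S|\ge t$) cover all subsets and Hall's condition holds, but for the late bunches there is an honest gap range $d-t+1\le|S|\le t-1$, and you give no mechanism to close it. Saying the earlier colourings have ``slack'' and can be chosen so that forbidden colours ``do not cluster'' on $X_t$ is not an argument: each ordinary bunch $X_p$ ($3\le p<t$) is itself constrained by its adjacencies to \emph{all} later bunches simultaneously, not just $X_t$, and you propose no invariant to maintain inductively. In the extreme case $t=d$, a vertex of $X_d$ can have backward degree $d-1$ and an empty list, and nothing you have written rules this out. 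As it stands the proposal is a plan, not a proof, and the hard half is missing.

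The paper circumvents this obstacle entirely by giving up on making every $x_i$ a b-vertex. It places the two special bunches at positions $1$ and $d$; the perfect matching from $X_1$ to each other bunch is used to define ``rows'' $R_j=N(x_1^j)\setminus\{x_1\}$, and the perfect matching from $X_d$ is used to \emph{define} the colouring directly: almost every $w\in S_2(x)\setminus X_d$ receives the colour $c(x_1^s)$ where $x_d^s$ is its unique neighbour in $X_d$. Any two vertices of the same colour then share a common neighbour in $X_d$, hence are nonadjacent by $g(G)=5$, so propriety is automatic and no Hall counting is needed. The price is that $x_{d-1}$ and $x_d$ are not b-vertices; instead $x_1^{d-2}$ and $x_1^{d-1}$ (coloured $d-1$ and $d$) serve as b-vertices, since their neighbourhoods are exactly the last two rows and those rows carry all $d+1$ colours. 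This ``b-vertices inside a bunch'' idea is precisely the novelty the paper highlights, and it is what replaces the late-bunch Hall argument you were unable to complete.
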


\begin{proof}
Let $x$ be a vertex of $G$ fulfilling the assumption about its bunches. Let $X_1$ and $X_d$ be the bunches with all their neighbors belonging to $N_2(x)$.

Note that the rest of the bunches for $x$ has not been numbered yet. In addition, the vertices within each bunch are also unordered. Our aim is to order the bunches (columns) and vertices within each bunch (rows) in such a way that the requirements, given in details below, hold. For the reader's convenience we number bunches/columns from left to right while vertices in bunches/rows are numbered from top to bottom (similarity to the matrix entries numbering - cf. Fig.~\ref{ex:matrix}).

\begin{description}
    \item[Requirement 1. ] All vertices in $X_1$ and $X_d$ have all their neighbors in $N_2(x)$ - the bunches from the assumption of the theorem.
    \item[Requirement 2. ] $N(x_1^j)=\{x_1,x_i^j: i\in\{2,3,\ldots,d\}\}$ for each $j\in[d-1]$, i.e. each row is formed by the closed neighborhood, excluding $x_1$, of a vertex from the first bunch. 
    \item[Requirement 3. ] $d-3$ vertices forming the set $N(x_d^{j})\cap X_{j+1}$, $j\in [d-3]$, are lying in the first three rows and they form an independent set, denoted by $\mathbb{I}_1$.

    \item[Requirement 4. ] The vertex from $N(x_d^{d-2})\cap X_{d-1}$ is lying in the second row.
\end{description}

Our aim is to achieve a partial $(d+1)$-coloring $c$ of $N_2[x]$, with color classes $V_1,\ldots,V_{d+1}$, that $x$ is a b-vertex for the color class $V_{d+1}$, and we have $d-2$ b-vertices among neighbors of $x$ and two b-vertices in $X_1$ - this is a novel approach.
This coloring when the desired ordering of $N_2(x)$ due to Requirements 1-4 is given, is defined as follows:
\begin{enumerate}
    \item $c(x):=d+1$,
    \item $c(x_i):=i$ for each $i\in [d]$,
    \item $c(x_1^j):=j+1$, $j\in[d-1]$,
    \item $c(v):=1$, for each $v \in \mathbb{I}_1$,
    \item $c(x_d^j):=d+1$ for each $j\in \{4, \ldots, d-1\}$,\label{step:5}
    \item for every uncolored vertex $w$ from the first $d-2$ columns and for vertices $w$ lying in $X_{d-1}$ in the last $d-4$ rows (i.e. all uncolored vertices excluding 6 vertices $x_{j}^t$, $j\in \{d-1,d\}$ and $t\in [3]$) set $c(w)=c(x_1^s)$ where $wx_d^s\in E(G)$ for some $s\in[d-1]$, \label{step:6}
    \item the remaining 6 uncolored vertices (fields with bold frames in top-right corner of matrix in Fig.~\ref{ex:matrix} or~\ref{ex:matrix2}) are colored in the greedy way.\label{step:7}
\end{enumerate}

Before we show that such a coloring of $N_2[x]$ is well defined, we give several straightforward observations based on Requirements 1-4 and the assumption of the theorem.

\begin{observation}\label{obs:reqc}
Each vertex $x_j^i$ in $j$-th column and $i$-th row is adjacent to at most one vertex in each other column for any $i\in[d-1]$, $j\in[d]$. Moreover, each vertex $x_j^i$ in $j$-th column and $i$-th row is adjacent to at most one vertex in each other row for every $i\in[d-1]$, $j \in \{2,\ldots, d\}$. 
\end{observation}

\begin{observation}\label{obs:reqe}
Each vertex $x_1^i$, $i \in [d-1]$, in the first bunch $X_1$ has all its neighbors in the same $i$-th row and this means exactly one neighbor in each other bunch/column.
\end{observation}

\begin{observation}\label{obs:reqf}
Each vertex $x_d^i$, $i \in [d-1]$, in the last bunch $X_d$ has exactly one neighbor in each other row and each other bunch/column.
\end{observation}

\begin{observation}\label{obs:reqd}
Establishing a vertex in any row in one of the bunches determines the vertices in the same row in all the other bunches.
\end{observation}

The first observation  is due to the girth assumption, $g(G)=5$, because otherwise there would exist $C_4$. The second and the third observations follow from the fact that bunches $X_1$ and $X_d$ have all the neighbors in $N_2(x)$ and by Observation \ref{obs:reqc}. The last observation follows from Requirement 2.

Now, we can show that such a coloring of $N_2[x]$ is proper. A vertex $w\in S_2(x)\backslash X_d$ has exactly one neighbor in $X_d$ by Observation \ref{obs:reqc}.
Note that all the colored vertices from $X_1,\ldots ,X_{d-2}$ have an unique color within their bunch. 
It implies also that vertices $x_i$, $i \in [d-2]$ are b-vertices in their color classes. 
Set $\mathbb{I}_1$ and with this also $V_1$ is independent. 
Suppose now that $u$ and $v$ from $S_2(x)\setminus X_d$ are colored with the same color $c(u)=c(v)$ for some $i\in[d-1]$. Due to the Step \ref{step:6} of the coloring procedure, vertices that have  received the same color must be adjacent to the same vertex in the last bunch $X_d$. So, if  $uv$ is an edge, then we have a triangle $x_d^tuvx_d^t$, where $x_d^t$ is the neighbor of $u$ and $v$, which is not possible. Therefore, the color classes $V_2\dots,V_d$ represents independent sets in $G$. Of course, the color class $V_{d+1}$ is also an independent set due to the coloring procedure (cf. Steps \ref{step:5} and \ref{step:7}) and we have a proper coloring of $N_2[x]$.

In addition, the coloring guarantees us the desirable b-vertices in all color classes, due to our requirements. Indeed, the set of b-vertices of the partial coloring includes vertices $x,x_1,x_2,\ldots,x_{d-2}$ which has been shown already. In addition vertices $x_1^{d-2}$ and $x_1^{d-1}$, similarly as vertices $x_1^4,\ldots,x_1^{d-3}$, are also b-vertices. It is enough to notice that all the vertices in the relevant rows are colored and the colors are unique within the row.

The rest of the graph $G$ is colored in the greedy way. So, $\chi_b(G)=d+1$ by Observation \ref{finish}.


Now we show how the assumed ordering of the vertices in rows and columns that meets the desired Requirements 1-4 can be achieved. Clearly, Requirement 1 is satisfied by the assumption. To satisfy Requirement 2, whenever a vertex $x_i^j$ is determined, $i\neq 1$, we define its neighbor in $X_1$ - the vertex $x_1^j$. As soon as vertex $x_1^j$ is fixed we can determine the vertices adjacent to it in the remaining bunches - the entire row $j$ is established. 
So, we need only to take care about Requirements 3 and 4. 
We start with $\mathbb{I}_1:=\emptyset$. Let $X_1$ and $X_d$  be the bunches from the assumption of the theorem - the bunches with all their neighbors in $N_2(x)$. In addition, at the beginning we have $d-2$ unordered bunches/columns with $d-1$ unordered rows. The further part of the construction is as follows.

\begin{itemize}
    \item Choose any vertex from $X_d$ and label it as $x_d^1$ - this is a vertex in the first row and in the last column of $S_2(x)$. With this all vertices of the first row (in ordered and unordered bunches) are established by Observation \ref{obs:reqd}.  
    \item Choose any unordered bunch as $X_2$, fix for $x_2^2$ the vertex from $X_2$ that is adjacent to $x_d^1$ and set $\mathbb{I}_{1} \leftarrow x_2^2$ (Requirement 3 is met for $j=1$). All the vertices of the 2nd row are now determined in each other bunch by Observation \ref{obs:reqd}. 

    \item   
    To choose an unordered bunch as $X_3$ we do in such a way that the following conditions will hold simultanously:
    \begin{itemize}
        \item $x_d^2x_3^3\in E(G)$,
        \item $x_d^3x_2^1 \notin E(G)$ (it will be used in Subcase 2 in the further part of the ordering).
    \end{itemize}   
    Such a choice of $X_3$ is possible because we need to choose such a bunch among $d-3$, $d\geq 7$ unordered bunches. Only at most one bunch does not keep the requirement (by Observation \ref{obs:reqc}). 

   Let $\mathbb{I}_{1} \leftarrow x_3^3$ (the first part of Requirement 3 folds for $j=2$, i.e. the vertex from $N(x_d^2)\cap X_{3}$ is lying in the first three rows).
 
    \item Choosing a bunch as $X_4$ can be two-fold. In this point we will also choose the bunch $X_{d-1}$
    to ensure that Requirement 4 holds.
    
    \begin{description}
    \item[Subcase 1:] The vertex from $N(x_d^3)$ lying in the second row is in one of the $d-4$ bunches  that are not yet ordered.

    We select exactly this bunch as $X_4$, i.e. $x_4^2x_d^3\in E(G)$, and let $\mathbb{I}_{1} \leftarrow x_4^2$ (cf. Fig.~\ref{ex:matrix}).
    Up to now, $\mathbb{I}_1=\{x_2^2,x_3^3,x_4^2\}$ and now we show that $\mathbb{I}_1$ is an independent set. The vertex $x_3^3$ was chosen as the neighbor of $x_d^2$ in $X_3$, so this vertex already has its neighbor in the second row. Hence $x_2^2x_3^3,x_3^3x_4^2\notin E(G)$ by Observation \ref{obs:reqc}. 
    In addition, $x_2^2$ and $x_4^2$ are not adjacent because they both lie in the same row and they are both adjacent to $x_1^2$. Thus, up to now, the set $\mathbb{I}_1$ is independent and contains vertices from the first three rows. Thus, Requirement 3 is satisfied for $j\in[3]$.
     
    To designate $X_{d-1}$ in this subcase we note that all the vertices from the second row in the bunches $X_1,X_2$ and $X_4$, have their neighbor in $X_d$ among $x_d^j$, $j\in[3]$. So, the vertex from $N(x_3^2)\cap X_d$ is among unordered vertices in $X_d$. 
    Let $x_d^{d-1}$ be fixed by being a neighbor of $x_3^2$. With this the last row in every bunch is determined by Observation \ref{obs:reqd}. 
    Now, we can take any vertex among the unordered vertices from $X_d$ and fix it as $x_d^{d-2}$. Note that the vertex from $N(x_d^{d-2})\cap R_2$ is among the unordered bunches. We take this bunch as $X_{d-1}$ and the Requirement 4 holds.

\begin{figure}[ht]
\begin{center}
\begin{pspicture}(0,1)(11,10)

\psframe[shadow=true,linewidth=2.5pt](1,1)(10,9)

\psline[linewidth=1pt](2,1)(2,9)
\psline[linewidth=1pt](3,1)(3,9)
\psline[linewidth=1pt](4,1)(4,9)
\psline[linewidth=1pt](5,1)(5,9)
\psline[linewidth=1pt](7,1)(7,9)
\psline[linewidth=1pt](8,1)(8,9)
\psline[linewidth=1pt](9,1)(9,9)

\psline[linewidth=1pt](1,2)(10,2)
\psline[linewidth=1pt](1,3)(10,3)
\psline[linewidth=1pt](1,4)(10,4)
\psline[linewidth=1pt](1,6)(10,6)
\psline[linewidth=1pt](1,7)(10,7)
\psline[linewidth=1pt](1,8)(10,8)

\psline[linewidth=3pt](8,8)(10,8)
\psline[linewidth=3pt](8,7)(10,7)
\psline[linewidth=3pt](8,6)(10,6)

\psline[linewidth=3pt](8,6)(8,8.95)
\psline[linewidth=3pt](9,6)(9,8.95)

\psframe[fillstyle=hlines,hatchangle=90,hatchsep=6pt](9,8)(10,9)
\psframe[fillstyle=vlines,hatchangle=90,hatchsep=4pt](9,7)(10,8)
\psframe[fillstyle=crosshatch,hatchangle=90,hatchsep=6pt](9,6)(10,7)

\psframe[fillstyle=hlines,hatchangle=90,hatchsep=6pt](2,7)(3,8)
\psframe[fillstyle=vlines,hatchangle=90,hatchsep=4pt](3,6)(4,7)
\psframe[fillstyle=crosshatch,hatchangle=90,hatchsep=6pt](4,7)(5,8)

\psframe[fillstyle=vlines,hatchsep=6pt](9,1)(10,2)
\psframe[fillstyle=hlines,hatchsep=6pt](9,2)(10,3)
\psframe[fillstyle=crosshatch,hatchsep=6pt](9,3)(10,4)

\psframe[fillstyle=crosshatch,hatchsep=6pt](7,7)(8,8)
\psframe[fillstyle=vlines,hatchsep=6pt](3,7)(4,8)
\psframe[fillstyle=hlines,hatchsep=6pt](8,7)(9,8)

\psframe[fillstyle=solid,fillcolor=white,linecolor=white](2.4,7.3)(2.6,7.7)
\psframe[fillstyle=solid,fillcolor=white,linecolor=white](4.4,7.3)(4.6,7.7)
\psframe[fillstyle=solid,fillcolor=white,linecolor=white](3.4,6.3)(3.6,6.7)
\psframe[fillstyle=solid,fillcolor=white,linecolor=white](7.4,7.3)(7.6,7.7)

\psframe[fillstyle=solid,fillcolor=white,linecolor=white](9.1,3.35)(9.85,3.7)
\psframe[fillstyle=solid,fillcolor=white,linecolor=white](9.1,2.35)(9.85,2.7)
\psframe[fillstyle=solid,fillcolor=white,linecolor=white](9.1,1.35)(9.85,1.7)

\rput(1.5,9.5){$X_1$}
\rput(2.5,9.5){$X_2$}
\rput(3.5,9.5){$X_3$}
\rput(4.5,9.5){$X_4$}
\rput(6,9.5){$\hdots$}
\rput(7.5,9.5){$X_{d-2}$}
\rput(8.5,9.5){$X_{d-1}$}
\rput(9.5,9.5){$X_d$}

\rput(0.5,8.5){$R_1$}
\rput(0.5,7.5){$R_2$}
\rput(0.5,6.5){$R_3$}
\rput(0.5,5){$\vdots$}
\rput(0.5,3.5){$R_{d-3}$}
\rput(0.5,2.5){$R_{d-2}$}
\rput(0.5,1.5){$R_{d-1}$}

\rput(9.5,5){\textbf{d+1}}
\rput(9.5,3.5){\textbf{d+1}}
\rput(9.5,2.5){\textbf{d+1}}
\rput(9.5,1.5){\textbf{d+1}}

\rput(1.5,8.5){\textbf{2}}
\rput(1.5,7.5){\textbf{3}}
\rput(1.5,6.5){\textbf{4}}
\rput(1.5,5){$\vdots$}
\rput(9.5,4.5){$\vdots$}
\rput(9.5,5.7){$\vdots$}
\rput(1.5,3.5){\textbf{d-2}}
\rput(1.5,2.5){\textbf{d-1}}
\rput(1.5,1.5){\textbf{d}}

\rput(5.5,7.5){$\hdots$}
\rput(6.5,7.5){$\hdots$}
\rput(2.5,7.5){\textbf{1}}
\rput(3.5,6.5){\textbf{1}}
\rput(4.5,7.5){\textbf{1}}
\rput(6,7.5){\textbf{1}}
\rput(7.5,7.5){\textbf{1}}

\end{pspicture}
\caption{Matrix $M_{d-1,d}$ corresponding to the performed ordering including Subcase $1$. Each integer in field $m_{ij}$ of $M$ represents the color dedicated to the vertex laying in row $R_i$ and bunch $X_j$, i.e. $c(x_j^i)$. In addition, if two fields $m_{ij}, m_{kl}$ are filled with the same pattern then $x_j^ix_l^k \in E(G)$.}
\label{ex:matrix}
\end{center}
\end{figure}

    \item[Subcase 2:] The vertex from $N(x_d^3)$ lying in the second row is among already ordered bunches $X_1,X_2,X_3$ or $X_d$. 
    
    Note that the vertex from $N(x_d^3) \cap R_2$ is not in $X_1$ neither in $X_2$ by Observation \ref{obs:reqc}. Clearly, $x_d^3$ is not adjacent to $x_d^2$ and therefore $x_d^3x_3^2\in E(G)$.
    By Observation \ref{obs:reqc} $x_d^3$ is not adjacent to $x_1^1$ neither to $x_3^1$ and not to $x_2^1$ by our assumption when fixing $X_3$. Hence the neighbor of $x_d^3$ lying in the first row, i.e. the vertex from $N(x_d^3)\cap R_1$ is  within unordered bunches. So, we fix $X_4$ such one that $x_4^1$ is adjacent to $x_d^3$ and $\mathbb{I}_{1} \leftarrow x_4^1$ (cf. Fig.~\ref{ex:matrix2}). 
    
    Up to now, $\mathbb{I}_1=\{x_2^2,x_3^3,x_4^1\}$ and we show that $\mathbb{I}_1$ is independent at this stage. As before $x_3^3x_2^2\notin E(G)$ by Observation \ref{obs:reqc} since $x_3^3x_d^2\in E(G)$. Similarly the edge $x_2^2x_d^1\in E(G)$ implies that $x_2^2x_4^1\notin E(G)$ by Observation \ref{obs:reqc}.  Moreover, $x_4^1$ was chosen as the neighbor of $x_d^3$, so it does not have any other neighbor in the third row, i.e. $x_4^1x_3^3\notin E(G)$ by Observation \ref{obs:reqc}.
    
    To designate $X_{d-1}$ we note that all the vertices from the second row in $X_1,X_2$ and $X_3$ have their neighbors in $X_d$ among $x_d^j$, $j\in[3]$. We fix $x_d^{d-1}$ as the neighbor of $x_4^2$ in $X_d$. With this the last row in every bunch is determined by Observation \ref{obs:reqd}. 
    Next, we are also interested in the neighbor of $x_4^1$ from the second row, i.e. from $N(x_4^1)\cap R_2$. First note that such a neighbor may not exist. If this is the case, then, similarly as in Subcase 1 we take any vertex among the unordered vertices from $X_d$ and fix it as $x_d^{d-2}$. Note that the vertex from $N(x_d^{d-2})\cap R_2$ is among unordered bunches. We take this bunch as $X_{d-1}$ and the Requirement 4 holds. Otherwise, if such a neighbor exists, then certainly it must be in the unordered bunches. Indeed, $x_1^2x_4^1\notin E(G)$ by Observation \ref{obs:reqc} because $x_4^1$ has already its neighbor in $X_1$. Similarly $x_2^2x_4^1\notin E(G)$ by Observation \ref{obs:reqc} because $x_2^2$ already has its neighbor $x_d^1$ in the first row. In addition, $x_3^2x_4^1\notin E(G)$ because together with $x_d^3$ such an edge would form $C_3$. Let $X_{d-1}$ be the bunch such that $x_{d-1}^2x_4^1\in E(G)$ (this property will be used in the further part to prove that $\mathbb{I}_1$ is independent). Moreover, we fix $x_d^{d-2}$ to be the neighbor of $x_{d-1}^2$ and the row before the last one is also determined in every bunch by Observation \ref{obs:reqd}.
    \end{description}
    
\begin{figure}[ht]
\begin{center}
\begin{pspicture}(0,1)(11,10)

\psframe[shadow=true,linewidth=2.5pt](1,1)(10,9)

\psline[linewidth=1pt](2,1)(2,9)
\psline[linewidth=1pt](3,1)(3,9)
\psline[linewidth=1pt](4,1)(4,9)
\psline[linewidth=1pt](5,1)(5,9)
\psline[linewidth=1pt](7,1)(7,9)
\psline[linewidth=1pt](8,1)(8,9)
\psline[linewidth=1pt](9,1)(9,9)

\psline[linewidth=1pt](1,2)(10,2)
\psline[linewidth=1pt](1,3)(10,3)
\psline[linewidth=1pt](1,4)(10,4)
\psline[linewidth=1pt](1,6)(10,6)
\psline[linewidth=1pt](1,7)(10,7)
\psline[linewidth=1pt](1,8)(10,8)

\psline[linewidth=3pt](8,8)(10,8)
\psline[linewidth=3pt](8,7)(10,7)
\psline[linewidth=3pt](8,6)(10,6)

\psline[linewidth=3pt](8,6)(8,8.95)
\psline[linewidth=3pt](9,6)(9,8.95)

\psframe[fillstyle=vlines,hatchangle=90,hatchsep=6pt](9,8)(10,9)
\psframe[fillstyle=hlines,hatchangle=90,hatchsep=4pt](9,7)(10,8)

\pspolygon[fillstyle=crosshatch,hatchangle=90,hatchsep=6pt](9,6)(9,7)(10,7)(9,6)
\pspolygon[fillstyle=dots,hatchangle=90,hatchsep=4pt](9,6)(10,6)(10,7)(9,6)

\psframe[fillstyle=vlines,hatchangle=90,hatchsep=6pt](2,7)(3,8)
\psframe[fillstyle=hlines,hatchangle=90,hatchsep=4pt](3,6)(4,7)
\psframe[fillstyle=crosshatch,hatchangle=90,hatchsep=6pt](4,8)(5,9)

\psframe[fillstyle=vlines,hatchsep=6pt](9,1)(10,2)
\psframe[fillstyle=hlines,hatchsep=6pt](9,2)(10,3)
\psframe[fillstyle=crosshatch,hatchsep=6pt](9,3)(10,4)

\psframe[fillstyle=crosshatch,hatchsep=6pt](7,7)(8,8)
\psframe[fillstyle=vlines,hatchsep=6pt](4,7)(5,8)
\psframe[fillstyle=hlines,hatchsep=6pt](8,7)(9,8)
\psframe[fillstyle=dots,hatchsep=4pt](3,7)(4,8)

\psframe[fillstyle=solid,fillcolor=white,linecolor=white](2.4,7.3)(2.6,7.7)
\psframe[fillstyle=solid,fillcolor=white,linecolor=white](4.4,8.3)(4.6,8.7)
\psframe[fillstyle=solid,fillcolor=white,linecolor=white](3.4,6.3)(3.6,6.7)
\psframe[fillstyle=solid,fillcolor=white,linecolor=white](7.4,7.3)(7.6,7.7)

\psframe[fillstyle=solid,fillcolor=white,linecolor=white](9.1,3.35)(9.85,3.7)
\psframe[fillstyle=solid,fillcolor=white,linecolor=white](9.1,2.35)(9.85,2.7)
\psframe[fillstyle=solid,fillcolor=white,linecolor=white](9.1,1.35)(9.85,1.7)

\rput(1.5,9.5){$X_1$}
\rput(2.5,9.5){$X_2$}
\rput(3.5,9.5){$X_3$}
\rput(4.5,9.5){$X_4$}
\rput(6,9.5){$\hdots$}
\rput(7.5,9.5){$X_{d-2}$}
\rput(8.5,9.5){$X_{d-1}$}
\rput(9.5,9.5){$X_d$}

\rput(0.5,8.5){$R_1$}
\rput(0.5,7.5){$R_2$}
\rput(0.5,6.5){$R_3$}
\rput(0.5,5){$\vdots$}
\rput(0.5,3.5){$R_{d-3}$}
\rput(0.5,2.5){$R_{d-2}$}
\rput(0.5,1.5){$R_{d-1}$}

\rput(9.5,5){\textbf{d+1}}
\rput(9.5,3.5){\textbf{d+1}}
\rput(9.5,2.5){\textbf{d+1}}
\rput(9.5,1.5){\textbf{d+1}}

\rput(1.5,8.5){\textbf{2}}
\rput(1.5,7.5){\textbf{3}}
\rput(1.5,6.5){\textbf{4}}
\rput(1.5,5){$\vdots$}
\rput(9.5,4.5){$\vdots$}
\rput(9.5,5.7){$\vdots$}
\rput(1.5,3.5){\textbf{d-2}}
\rput(1.5,2.5){\textbf{d-1}}
\rput(1.5,1.5){\textbf{d}}

\rput(5.5,7.5){$\hdots$}
\rput(6.5,7.5){$\hdots$}
\rput(2.5,7.5){\textbf{1}}
\rput(3.5,6.5){\textbf{1}}
\rput(4.5,8.5){\textbf{1}}
\rput(6,7.5){\textbf{1}}
\rput(7.5,7.5){\textbf{1}}

\end{pspicture}
\caption{Matrix $M_{d-1,d}$ corresponding to the performed ordering including Subcase $2$. Each integer in field $m_{ij}$ of $M$ represents color dedicated to vertex laying in row $R_i$ and bunch $X_j$, i.e. $c(x_j^i)$. In addition, if two fields $m_{ij}, m_{kl}$ are filled with the same pattern then $x_j^ix_l^k \in E(G)$.}
\label{ex:matrix2}
\end{center}
\end{figure}

Note that up to now we have established bunches $X_j$ for $j\in[4]\cup\{d-1,d\}$ as well as rows $R_i$ for $i\in[3]\cup \{d-2,d-1\}$. 

\item In this step we complete the ordering.
    
    Note that the unordered vertices of $X_d$ have a neighbor in the second row among the unordered bunches. 
    We arbitrarily order the remaining vertices of $X_d$.
    For every $j\in \{4,\dots,d-3\}$ we choose a bunch $X_{j+1}$ to be such that $x_d^jx_{j+1}^2\in E(G)$. After this all the vertices are ordered with respect to bunches as well as with respect to rows. Moreover, Requirement 4 holds. 
        
    Finally, let $\mathbb{I}_1:= \mathbb{I}_1 \cup (\bigcup_{i=5}^{d-2} \{x_i^2\})$. Note that $\mathbb{I}_1$ is still an independent set. Indeed, to the previous $\mathbb{I}_1$ of cardinality 3 we have added vertices from the same row that are definitely not adjacent to each other, and additionally, by selecting $X_{d-1}$ in Subcase 2 (i.e. by choosing the vertex from $N(x_4^1)\cap R_2$ as $x_{d-1}^2$), they are not adjacent to the previously selected vertices in $\mathbb{I}_1$.
    \end{itemize}
    
    
    Hence, the desirable ordering and, in consequence, the desirable coloring is achievable. The proof is complete.
\end{proof}

\section{Conclusion and future work}

Two approaches presented in this work are kind of complementary with the respect to the density (number) of edges in $G[S_2(x)]$. First approach with the limited number of six-cycles including $x$ in $N_2[x]$ yield a small number of edges in $G[S_2(x)]$, while the other approach, when (at least) two bunches have all of its neighbors in $N_2(x)$ yields much bigger number of edges in $G[S_2(x)]$. So, one could hope that by improving both approaches, one could meet somewhere in the middle and confirm Conjecture \ref{conj}.

For the first approach, one could hope to improve the fixed number (five) of approved six-cycles from Theorem \ref{thm:5C6} to a bigger constant $k>5$ or, even better, to a function that depends on $d$.
If we observe this from the perspective of Petersen graph $P$, then is seems that the best linear function with respect to $d$ is $d-2$. Namely, $\chi_b(P)=3<d+1$ as shown in \cite{JaKl} and every vertex $x$ from Petersen graph is contained in $2=d-1$ six-cycles from $N_2[x]=V(P)$. So, the next question seems natural.

\begin{question}
How far can the bound on the number of $C_6$ including one particular vertex $x$ from Theorem \emph{\ref{thm:5C6}} be increased? Is the Conjecture \ref{conj} true if we allow $d-2$ six-cycles in $N_2[x]$ that contains $x$?
\end{question}

In the second approach it seems that we did avoid the problems with respect to $P$ due to $d\geq 7$. For $P$ all the bunches (there are three in $P$) have all their neighbors in $N_2[x]=V(P)$. However the approach from Theorem \ref{thm:d:2bunches} does not work as there are not enough rows (and with this also columns) present in $P$. With this it seems that we are on the safe side. However, one needs to find an improvement in the proof to get from two bunches to maybe just one bunch or even better to any two vertices in one bunch that have all the neighbors in $N_2(x)$. The reason for the last suggestion is that in the proof of Theorem \ref{thm:d:2bunches} at the end we have only two b-vertices that do not belong to $N(x)$ and they belong to a bunch with all the neighbors in $N_2(x)$. 

\begin{question}
Is $\chi_b(G)=d+1$ for a $d$-regular graph with $g(G)=5$ where there exists $x\in V(G)$ with only one bunch such that all the neighbors of that bunch are in $N_2(x)$. Even more, are just two vertices from one bunch with the property that its neighbors belong to $N_2(x)$ enough to have $\chi_b(G)=d+1$?
\end{question}

\noindent\textbf{Acknowledgments}: I.P. was partially supported by Slovenian Research and Innovation Agency by research program number P1-0297.


\end{document}